\documentclass[a4paper,11pt,oneside,reqno]{amsart}
\usepackage[T1]{fontenc} \usepackage{lmodern}
\usepackage{amsmath, amssymb, amsthm, mathtools}
\usepackage{mathrsfs}
\usepackage{hyperref}
\usepackage[a4paper, margin=1in]{geometry}
\usepackage{enumitem}
\setlist[enumerate]{label=(\arabic*), ref=(\arabic*)}
\hypersetup{bookmarksnumbered=true}
\numberwithin{equation}{section}

\newtheorem{theorem}{Theorem}[section]

\newtheorem{proposition}[theorem]{Proposition}
\newtheorem{lemma}[theorem]{Lemma}

\theoremstyle{definition}
\newtheorem{definition}[theorem]{Definition}
\newtheorem{assumption}[theorem]{Assumption}
\theoremstyle{remark}
\newtheorem{remark}[theorem]{Remark}

\newcommand{\R}{\mathbb{R}}
\newcommand{\haus}{\mathcal{H}}
\newcommand{\Hom}{\mathrm{Hom}}
\newcommand{\G}{\mathbf{G}}
\newcommand{\loc}{\mathrm{loc}}
\newcommand{\wt}[1]{\mathord{\lVert #1 \rVert}}
\newcommand{\V}{\mathbf{V}}
\newcommand{\IV}{\mathbf{IV}}
\newcommand{\var}{\mathbf{var}}
\newcommand{\VarTan}{\mathrm{VarTan}}
\newcommand{\C}{\mathbf{C}}
\newcommand{\g}{\mathrm{g}}
\renewcommand{\c}{\mathbf{c}}
\newcommand{\id}{\mathrm{id}}
\DeclarePairedDelimiter{\abs}{\lvert}{\rvert}

\DeclareMathOperator{\clos}{clos}
\DeclareMathOperator{\spt}{spt}
\DeclareMathOperator{\dist}{dist}
\DeclareMathOperator{\Lip}{Lip}
\DeclareMathOperator{\tr}{tr}

\begin{document}
\title[Non-uniqueness of Brakke flows]{Non-uniqueness of Brakke flows starting from minimal surfaces with singularities}
\author{Kotaro Motegi}
\address{Department of Mathematics, Institute of Science Tokyo, 2-12-1 Ookayama, Meguro-ku, Tokyo 152-8551, Japan}
\email{motegi.k.3c77@m.isct.ac.jp}

\begin{abstract}
We prove the existence of a genuinely time-dependent Brakke flow starting from $\Gamma_0 \subset \R^{n+1}$ whose associated multiplicity-one varifold is stationary, provided that, at some singular point, the scale-invariant $L^2$ distance of $\Gamma_0$ from an $n$-dimensional plane has sufficiently small limsup as the scale tends to zero.
This yields the dynamical instability of $\Gamma_0$, a notion recently introduced by Stuvard and Tonegawa, and hence the non-uniqueness of Brakke flows starting from $\Gamma_0$.
A notable feature of our result is that it holds without assuming the uniqueness of tangent cones at the singular point.
\end{abstract}

\maketitle

\section{Introduction}
\label{sec:introduction}

A family of hypersurfaces $\{\Gamma_t\}_{t \geq 0}$ is called a mean curvature flow (hereafter abbreviated as MCF) if its normal velocity $v$ is equal to the mean curvature $h$ of $\Gamma_t$ at each point and time.
MCF arises naturally as a model for the motion of grain boundaries in metals.
Since grain boundaries are not necessarily smooth and may have singularities, classical smooth solutions are insufficient to describe their evolution.
To describe the evolution of such singular surfaces, Brakke introduced in \cite{Bra78} a weak formulation of MCF, now known as Brakke flow.
Within this framework, hypersurfaces are replaced by varifolds, which are a measure-theoretic generalization of submanifolds that allows singularities, and the PDE $v = h$ is replaced by an inequality describing the decrease of area along the flow.
Given a smooth hypersurface $\Gamma_0$, there exists a unique smooth MCF starting from $\Gamma_0$, at least for a short time.
However, if $\Gamma_0$ has singularities, uniqueness may fail in general.
For example, although the union of two transverse lines is a static Brakke flow, there exists another Brakke flow starting from the same initial datum in which the quadruple junction breaks up into two triple junctions.

To investigate this phenomenon, Stuvard and Tonegawa introduced in \cite{ST25} a notion of instability for minimal surfaces, called \emph{dynamical instability}.
Let $\Gamma_0$ be a closed countably $n$-rectifiable set in a strictly convex domain in $\R^{n+1}$ such that the associated multiplicity-one varifold is stationary.
We say that $\Gamma_0$ is dynamically unstable if there exists a non-trivial Brakke flow with the same fixed boundary and initial datum.
Here, non-triviality means that the total mass of the flow is strictly smaller than the initial mass $\haus^n(\Gamma_0)$ for every $t > 0$, which ensures that the surface genuinely moves immediately after the initial time, rather than remaining stationary for some positive time interval.
Since $\Gamma_0$ is stationary, the static flow associated with $\Gamma_0$ is itself a Brakke flow.
Hence, the dynamical instability of $\Gamma_0$ implies the non-uniqueness of Brakke flows starting from $\Gamma_0$.
Since smooth minimal surfaces are expected to be dynamically stable as a consequence of classical regularity and uniqueness theory for smooth MCF, Stuvard and Tonegawa asked which types of singularities force dynamical instability.
Their main result shows that $\Gamma_0$ is dynamically unstable if, at some singular point $x_0 \in \Gamma_0$, $\Gamma_0$ has a tangent cone given by a plane with multiplicity at least two, and if the rescalings $(\Gamma_0 - x_0)/r$ converge to this plane at a rate faster than $(\log(1/r))^{-1/2}$ as $r \to 0$.

In the present paper, we prove dynamical instability for a broader class of stationary varifolds.
The following is our main result.

\begin{theorem}
    \label{thm:intro}
    For any $\Theta_0 > 1$, there exists a constant $\mu_0 = \mu_0(n,\Theta_0) \in (0,1)$ such that the following holds.
    Suppose that a closed countably $n$-rectifiable set $\Gamma_0$ satisfies:
    \begin{enumerate}
        \item The multiplicity-one varifold associated to $\Gamma_0$ is stationary;
        \item The density $\Theta^n(\haus^n\lfloor_{\Gamma_0},0)$ of $\haus^n\lfloor_{\Gamma_0}$ at the origin satisfies $1 < \Theta^n(\haus^n\lfloor_{\Gamma_0},0) \leq \Theta_0$;
        \item There exists an $n$-dimensional plane $T$ such that
            \begin{equation}
                \label{eq:intro_assump}
                \limsup_{R \to 0} R^{-n-2}\int_{\Gamma_0 \cap U_R} \abs{T^\perp x}^2 \,d\haus^n(x) \leq \mu_0^2.
            \end{equation}
    \end{enumerate}
    Then there exists a non-trivial Brakke flow starting from $\Gamma_0$.
\end{theorem}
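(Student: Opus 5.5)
The plan is to follow the Stuvard--Tonegawa strategy. We construct a Brakke flow by Kim--Tonegawa type approximation, starting from a modified initial datum in which $\Gamma_0$ is locally ``opened up'' near the origin. We then show that the resulting limit flow loses mass immediately. The point is that, near $0$, $\Gamma_0$ is $L^2$-close to the plane $T$ at every small scale while having density $\Theta>1$. Such a configuration should be area-reducible at every small scale by a Lipschitz deformation. Running the flow from a competitor with strictly smaller mass, and keeping it close in varifold sense to $\Gamma_0$ at time $0$, forces $\|V_t\|(\R^{n+1})<\haus^n(\Gamma_0)$ for $t>0$. We do not want to rely on a unique tangent cone. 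Instead we work at each scale $R$ separately, using only \eqref{eq:intro_assump} and the density bounds, which hold uniformly for all small $R$.

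\textbf{Step 1: a scale-uniform area-reducing deformation.} For every $R$ small, we produce a map $f_R$ with $f_R=\id$ outside $U_R$ and $\Lip f_R\le C$ such that
\[
\haus^n(f_R(\Gamma_0\cap U_R))\le \haus^n(\Gamma_0\cap U_R)-\epsilon_0\omega_nR^n ,
\]
with $\epsilon_0=\epsilon_0(n,\Theta_0)>0$. To obtain this, rescale to $R=1$. By \eqref{eq:intro_assump} and monotonicity, $\Gamma_0\cap U_1$ lies in an $L^2$, and by Allard-type $L^2$--$L^\infty$ estimates an $L^\infty$, neighbourhood of $T$ of size $C\mu_0$ on $U_{1/2}$. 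Its mass there is about $\Theta\,\omega_n/2^n\ge(1+\delta)\omega_n/2^n$; here we need $\Theta>1$, and by upper semicontinuity of density this persists at scales. We then compose a collapse onto $T$ inside a thin slab with a cutoff. The projection of the sheets onto $T$ counts $\haus^n$ only once on the image, so the area drops by roughly $(\Theta-1)\omega_n/2^n$. The cost of the cutoff region is controlled by $C\mu_0$, so choosing $\mu_0$ small gives the gain. A compactness argument can replace the explicit estimate: any varifold limit of rescalings is a stationary cone-like varifold of density in $(1,\Theta_0]$ supported in $T$. Hence it is $\theta|T|$ with $\theta\ge2$ an integer, by the constancy theorem, and the projection reduces its mass by a definite fraction.

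\textbf{Step 2: constructing the flow.} Fix $R_j\to0$ and set $\Gamma_0^j=f_{R_j}(\Gamma_0)$. Then $\Gamma_0^j\to\Gamma_0$ as varifolds, and $\haus^n(\Gamma_0^j)\le\haus^n(\Gamma_0)-\epsilon_0\omega_nR_j^n$. We apply the existence theory of Brakke flows with fixed boundary from an arbitrary closed rectifiable initial datum, as used in \cite{ST25} following Kim--Tonegawa. This gives flows $V^j_t$ starting from $\Gamma_0^j$. By Brakke compactness we extract a limit flow $V_t$ starting from $\Gamma_0$; since the initial masses converge, the initial datum is attained.

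\textbf{Step 3: non-triviality, the main obstacle.} The difficulty is that the mass deficit $\epsilon_0R_j^n$ vanishes in the limit. So we must show that for each fixed $t>0$ the limit flow has a deficit, uniformly in $j$. I would instead fix $t$ and use the scale $R\sim\sqrt t$. Using parabolic rescaling and the localized Brakke inequality, together with Huisken monotonicity with Gaussian density at most $\Theta_0$, I would argue by contradiction. Suppose $\|V_t\|(\R^{n+1})=\haus^n(\Gamma_0)$. Since mass is non-increasing, the flow is then static on $[0,t]$, so $V_s=|\Gamma_0|$ for $s\le t$. This has to be turned into a contradiction with the construction, which is built so that it can always locally exploit the reducing deformation of Step 1 at scale $\sqrt s$. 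Concretely, I would redo the construction (as Stuvard--Tonegawa do) with the time-discrete Lipschitz-deformation step applied at the scale dictated by the time step. The uniform gain $\epsilon_0 R^n$ from Step 1 at every scale yields, after summing, a deficit $\ge c\,t^{n/2}$ that survives the limit. Making this uniform gain compatible with the smoothing and error estimates of the approximation scheme is where I expect the real work to be, and it is the place where dropping tangent-cone uniqueness is paid for by the scale-by-scale use of \eqref{eq:intro_assump}.
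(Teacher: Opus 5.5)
There is a genuine gap, and it sits exactly where you place the ``real work'': Step~3 contains no argument. Your Steps~1--2 are essentially the collapse in Lemma~\ref{lem:hole_nucleation} (taken from \cite{ST25}). Your reduction ``mass preserved $\Rightarrow$ $h=0$ a.e.\ $\Rightarrow$ weights non-increasing $\Rightarrow$ static'' is the paper's last step. What is missing is a quantity that separates $V_s$ from $\haus^n\lfloor_{\Gamma_0}$ at every small $s>0$, with bounds uniform in the approximation parameter. The deficit $\epsilon_0R_j^n$ tends to zero. ``Summing uniform gains over scales'' inside a modified Kim--Tonegawa scheme is a programme, not an estimate: the scheme's Lipschitz deformations act at discretization scales that vanish in the limit, and you give no mechanism forcing the approximating flows to keep losing mass near $0$ uniformly in $j$.

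Nor does the natural mechanism work by itself. The paper tracks the Gaussian density ratio $\wt{V^\varepsilon_t}(\Phi_\varepsilon(\cdot,t))$. After the collapse it is $\le\c$ at $t=0$, whereas $\Gamma_0$ has ratio $>(1+\eta/2)\c$ at every small scale, by Allard's gap $\Theta\ge1+\eta(n)$. Controlling this ratio with the expanding holes lemma costs about $c\mu^2$ per dyadic time scale, i.e.\ $c\mu^2\log((t+\varepsilon^2)/\varepsilon^2)$ starting from time $0$, which is unbounded as $\varepsilon\to0$. This is exactly why \cite{ST25} needed logarithmic decay, and any scale-by-scale summation runs into the same divergence.

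The idea you are missing is how the paper beats this divergence using integrality. First, Huisken's monotonicity (Proposition~\ref{prop:time_uniform}) propagates the flatness \eqref{eq:intro_assump} and the density bound to positive times. Next, let $T_\varepsilon$ be the first time the ratio reaches $\alpha\c$, with $\alpha\in(1,2)$, and suppose $T_\varepsilon\to0$. Rescaling parabolically so that $T_\varepsilon=1$ gives a limit flow with density ratio $\le c\Theta_0$ and excess $\le c\mu^2(1+t/R^2)$ at all scales. At some $t_0\in[1/2,1]$ with an $L^2$ curvature bound, Lemma~\ref{lem:gap} yields ratio $\le(1+\alpha)\c/2$. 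That lemma is compactness plus the constancy theorem: an $L^2$-flat integral varifold with ratio $\le\alpha\c<2\c$ has ratio $\le(1+\nu)\c$. One application of the expanding holes lemma on the single scale $[t_0,1]$ then costs only $c\mu^2$. This contradicts the ratio being equal to $\alpha\c$ at time $1$.

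Hence $T_{\varepsilon_j}\ge T_0>0$ along a subsequence. The limit flow then satisfies $\wt{V_t}(\Phi_0(\cdot,t))\le\alpha\c<\wt{V_0}(\Phi_0(\cdot,t))$ for $t\in(0,T_0]$, and this is what rules out the static flow.

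Separately, attainment of the initial datum does not follow from convergence of the initial masses alone. The paper obtains it from the grain structure and condition~\ref{itm:initial_continuity} of Assumption~\ref{asm:initial_surface}, via \cite{ST21,ST25}.
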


The precise statement is given in Theorem~\ref{thm:main}.
A significant feature of this result is that it does not require the uniqueness of tangent cones.
Indeed, \eqref{eq:intro_assump} is satisfied whenever all tangent cones have sufficiently small $L^2$ distance from $T$ on the unit ball (see Remark~\ref{rem:assumption}).
As an important special case, if $\Gamma_0$ has a unique tangent cone $\C$ at the origin, and $\C$ is either a plane with multiplicity at least two or a union of half-planes meeting along a common axis at sufficiently small angles, then \eqref{eq:intro_assump} holds for some plane, and hence $\Gamma_0$ is dynamically unstable.
Therefore, Theorem~\ref{thm:intro} removes the logarithmic decay assumption from \cite{ST25}, and broadens the class of singularities for which dynamical instability follows.

There are several situations in which the assumptions of Theorem~\ref{thm:intro} are automatically satisfied.
For stationary two-valued Lipschitz graphs, Becker-Kahn proved in \cite{BK17} the uniqueness of tangent cones that are either a pair of transverse planes or a union of four half-planes meeting along a common axis.
More recently, Becker-Kahn, Minter, and Wickramasekera \cite{BKMW25} established an $\varepsilon$-regularity theorem for stationary integral varifolds that are close to a multiplicity-two plane and satisfy a certain topological structural condition; in particular, their result implies the uniqueness of flat tangent cones for stationary two-valued Lipschitz graphs.
In the setting of stable codimension-one integral varifolds, Minter and Wickramasekera proved in \cite{MW24} that, in the absence of classical singularities of density less than $Q$, tangent cones that are either a multiplicity-$Q$ plane or a union of $2Q$ half-planes meeting along a common axis are unique.
Subsequently, Minter \cite{Min24} proved the uniqueness of tangent cones consisting of $2Q+1$ half-planes meeting along a common axis for the same class of stable codimension-one integral varifolds.
Since every classical singularity of a codimension-one area minimizing current mod $p$ has density at least $p/2$, the results of \cite{MW24} apply to this setting with $Q = p/2$.
In particular, a multiplicity-$p/2$ plane or a union of $p$ half-planes meeting along a common axis is a unique tangent cone, see also \cite{DLHMS26}.
Thus, by the results above, $\Gamma_0$ is dynamically unstable whenever it belongs to one of these classes and admits, at a singular point, a tangent cone that is either a plane with multiplicity or a union of half-planes meeting along a common axis at sufficiently small angles.

Next, we explain the main idea of the proof.
Our general strategy follows that of \cite{ST25}.
For each $\varepsilon > 0$, we construct a suitable approximation of $\Gamma_0$ and a corresponding Brakke flow $\{V^\varepsilon_t\}_{t\geq 0}$ starting from it.
The main difficulty is to ensure that the limit flow is non-trivial.
For this, it suffices to establish a uniform upper bound for the density ratio of $V^\varepsilon_t$ in the ball $U_{\sqrt{t+\varepsilon^2}}$ on a short time interval independent of $\varepsilon$.
Brakke's expanding holes lemma provides an estimate of the density ratio at later times in terms of its value at earlier times and an error term involving the scale-invariant $L^2$ distance from a plane.
In \cite{ST25}, the logarithmic decay assumption is used to control this error term.
Under our weaker assumption \eqref{eq:intro_assump}, however, this argument does not directly apply, since the error term may fail to be integrable near the initial time.
Our key idea is to exploit the integrality of the varifolds.
If the $L^2$ distance from a plane is sufficiently small, then the density ratio must be close to an integer.
We use this observation to rule out the possibility that the desired density ratio bound breaks down arbitrarily close to the initial time.
Indeed, suppose that the bound fails at arbitrarily small times.
For each $\varepsilon > 0$, let $T_\varepsilon$ denote the supremum of the times up to which the density ratio remains below $1 + \delta$, where $\delta \in (0,1)$ is a small constant.
If $\limsup_{\varepsilon \to 0} T_\varepsilon = 0$, we rescale the flows so that the critical time $T_\varepsilon$ is normalized to $1$, and pass to a limit Brakke flow.
Then the density ratio attains the threshold $1 + \delta$ at the time $1$, while the $L^2$ closeness inherited from \eqref{eq:intro_assump} forces the density ratio to remain close to $1$, yielding a contradiction.

The paper is organized as follows.
Section \ref{sec:preliminaries} introduces the notation and terminology used throughout the paper.
In Section \ref{sec:main_result}, we state the assumptions on the initial hypersurface and present the precise statement of the main result.
Section \ref{sec:hole_nucleation} is devoted to the construction of a suitable approximation of the initial surface.
In Section \ref{sec:expanding_holes}, we establish Brakke's expanding holes lemma.
Section \ref{sec:monotonicity} applies Huisken's monotonicity formula to derive uniform-in-time estimates for the density ratio and the scale-invariant $L^2$ distance.
Finally, Section \ref{sec:proof} contains the proof of the main result.

\subsection*{Acknowledgment}
The author would like to express his gratitude to his supervisor, Professor Yoshihiro Tonegawa, for suggesting this problem and his continuous encouragement.
This research was supported by the Science Tokyo Support Program for Doctoral Students, funded by the Universities for International Research Excellence.

\section{Preliminaries}
\label{sec:preliminaries}

\subsection{Basic notation}
\label{subsec:notation}

Throughout this paper, $n$ denotes a positive integer.
For $x \in \R^{n+1}$ and $R > 0$, let $U_R(x)$ denote the open ball in $\R^{n+1}$ centered at $x$ with radius $R$.
We write $U_R$ for $U_R(0)$.
The symbol $\haus^n$ denotes the $n$-dimensional Hausdorff measure in $\R^{n+1}$, and $\omega_n$ denotes the volume of the unit ball in $\R^n$.
For a subset $A \subset \R^{n+1}$, let $\clos A$ denote the closure of $A$ in $\R^{n+1}$.
Let $\G(n+1,n)$ be the set of all $n$-dimensional subspaces of $\R^{n+1}$.
We regard $\G(n+1,n)$ as a subset of $\Hom(\R^{n+1},\R^{n+1})$ by identifying each $S \in \G(n+1,n)$ with the orthogonal projection onto $S$.
For $A, B \in \Hom(\R^{n+1},\R^{n+1})$, we define their inner product by $A \cdot B = \tr(A^\ast \circ B)$, where $A^\ast$ denotes the adjoint of $A$.
We write $\abs{\cdot}$ for the norm induced by this inner product.
For $x \in \R^{n+1}$ and $R > 0$, let $\eta_{x,R} \colon \R^{n+1} \to \R^{n+1}$ be the map defined by $\eta_{x,R}(y) = (y-x)/R$.

\subsection{Varifolds}
\label{subsec:varifolds}

Let $U \subset \R^{n+1}$ be an open set, and define $G_n(U) = U \times \G(n+1,n)$.
A $n$-varifold in $U$ is a Radon measure on $G_n(U)$, and set of all $n$-varifolds in $U$ is denoted by $\V_n(U)$.
For $V \in \V_n(U)$, its weight measure $\wt{V}$ is the Radon measure on $U$ defined by
\begin{equation*}
    \wt{V}(\phi) = \int_{G_n(U)} \phi(x) \,dV(x,S)
\end{equation*}
for every $\phi \in C_c(U)$.
We say that $V \in \V_n(U)$ is integral if there exist an $\haus^n$-measurable, countably $n$-rectifiable set $\Gamma \subset U$ and a non-negative, integer-valued, locally $\haus^n$-integrable function $\theta$ on $\Gamma$ such that
\begin{equation}
    \label{eq:varifolds_integral}
    V(\phi) = \int_\Gamma \phi(x,T_x\Gamma)\theta(x) \,d\haus^n(x)
\end{equation}
for every $\phi \in C_c(G_n(U))$, where $T_x\Gamma$ denotes the approximate tangent space of $\Gamma$ at $x$, which exists for $\haus^n$-a.e. $x \in \Gamma$.
The varifold defined by \eqref{eq:varifolds_integral} is denoted by $\var(\Gamma,\theta)$.
We write the set of all integral $n$-varifolds in $U$ as $\IV_n(U)$.

The first variation of $V \in \V_n(U)$ is the linear functional $\delta V$ on $C^1_c(U;\R^{n+1})$ defined by
\begin{equation*}
    \delta V(g) = \int_{G_n(U)} \nabla g(x) \cdot S \,dV(x,S)
\end{equation*}
for every $g \in C^1_c(U;\R^{n+1})$.
When $\delta V$ extends to a continuous linear functional on $C_c(U;\R^{n+1})$, we say that $V$ has locally bounded first variation, and denote the corresponding total variation measure by $\wt{\delta V}$.
If $\wt{\delta V}$ is absolutely continuous with respect to $\wt{V}$, there exists a unique vector field $h(V,\cdot) \in L^1_\loc(\wt{V};\R^{n+1})$ such that
\begin{equation}
    \label{eq:mean_curvature}
    \delta V(g) = -\int_U g(x) \cdot h(V,x) \,d\wt{V}(x)
\end{equation}
for all $g \in C^1_c(U;\R^{n+1})$.
We call $h(V,\cdot)$ the generalized mean curvature of $V$.
For any $V \in \IV_n(U)$, Brakke's perpendicularity theorem \cite[5.8]{Bra78} says that the generalized mean curvature is perpendicular to the tangent space almost everywhere, that is,
\begin{equation}
    \label{eq:perpendicularity}
    S(h(V,x)) = 0 \qquad \text{for $V$-a.e. $(x,S) \in G_n(U)$.}
\end{equation}

A varifold is said to be stationary if its generalized mean curvature vanishes.
If $V \in \IV_n(U)$ is stationary, then the monotonicity formula \cite[17.5]{Sim83} implies that the density
\begin{equation*}
    \Theta^n(\wt{V},x) = \lim_{r \to 0} \frac{\wt{V}(U_r(x))}{\omega_nr^n}
\end{equation*}
exists at every $x \in U$.
Moreover, for every sequence $r_j \to 0$, there exist a subsequence $r_{j'}$ and a stationary $n$-varifold $\C \in \IV_n(\R^{n+1})$ such that $(\eta_{x,r_{j'}})_\sharp V$ converges to $\C$ as Radon measures on $G_n(\R^{n+1})$ as $j' \to \infty$.
The limit $\C$ is a cone, namely, $(\eta_{0,\lambda})_\sharp\C = \C$ for all $\lambda > 0$.
Any such cone $\C$ is called a tangent cone of $V$ at $x$.
The set of all tangent cones of $V$ at $x$ is denoted by $\VarTan(V,x)$.

\subsection{Brakke flows}
\label{subsec:Brakke_flows}

In this subsection, we recall the definition of Brakke flow.
Let $U \subset \R^{n+1}$ be an open set, and let $I \subset \R$ be an interval.

\begin{definition}
    \label{def:Brakke_flow}
    We say that a family of $n$-varifolds $\{V_t\}_{t \in I}$ in $U$ is a $n$-dimensional Brakke flow in $U$ if the following hold:
    \begin{enumerate}
        \item \label{itm:Brakke_integrality}
            For a.e. $t \in I$, $V_t \in \IV_n(U)$;
        \item \label{itm:Brakke_first_variation}
            For a.e. $t \in I$, $V_t$ has locally bounded first variation and $\wt{\delta V_t} \ll \wt{V_t}$;
        \item \label{itm:Brakke_mean_curvature}
            For any $\tilde{U} \subset\subset U$ and $\tilde{I} \subset\subset I$,
            \begin{equation*}
                \sup_{t \in \tilde{I}} \wt{V_t}(\tilde{U}) + \int_{\tilde{I}}\int_{\tilde{U}} \abs{h(V_t,x)}^2 \,d\wt{V_t}(x)dt < \infty;
            \end{equation*}
        \item \label{itm:Brakke_ineq}
            For all $t_1,t_2 \in I$ with $t_1 < t_2$ and $\phi \in C^1_c(U \times I;[0,\infty))$,
            \begin{multline}
                \label{eq:Brakke_ineq}
                \int_U \phi(x,t_2) \,d\wt{V_{t_2}}(x) - \int_U \phi(x,t_1) \,d\wt{V_{t_1}}(x) \\
                \leq \int_{t_1}^{t_2}\int_U (-\phi(x,t)h(V_t,x) + \nabla\phi(x,t)) \cdot h(V_t,x) + \partial_t\phi(x,t) \,d\wt{V_t}(x)dt.
            \end{multline}
    \end{enumerate}
    Furthermore, if $\partial U$ is not empty and $\Sigma \subset \partial U$, we say that $\{V_t\}_{t \in I}$ has fixed boundary $\Sigma$ if, together with conditions \ref{itm:Brakke_integrality}--\ref{itm:Brakke_ineq} above, it holds
    \begin{enumerate}[resume]
        \item For all $t \in I$, $(\clos(\spt\wt{V_t})) \setminus U = \Sigma$.
    \end{enumerate}
\end{definition}

\section{Main result}
\label{sec:main_result}

We begin by stating the assumptions needed for the main theorem.

\begin{assumption}
    \label{asm:initial_surface}
    Let us fix an integer $N \geq 2$.
    Suppose that $U$, $\Gamma_0$, and $\{E_{0,i}\}_{i=1}^N$ satisfy the following conditions:
    \begin{enumerate}
        \item \label{itm:initial_U}
            $U \subset \R^{n+1}$ is a strictly convex bounded domain with boundary $\partial U$ of class $C^2$;
        \item \label{itm:initial_Gamma}
            $\Gamma_0 \subset U$ is a relatively closed, countably $n$-rectifiable set with $\haus^n(\Gamma_0) < \infty$;
        \item \label{itm:initial_E}
            $E_{0,1},\dots,E_{0,N}$ are non-empty, open, and mutually disjoint subsets of $U$ such that $U \setminus \Gamma_0 = \bigcup_{i=1}^N E_{0,i}$;
        \item \label{itm:initial_partial}
            $\partial\Gamma_0 \coloneqq \clos(\Gamma_0) \setminus U$ is not empty, and for each $x \in \partial\Gamma_0$ there exist at least two indexes $i_1 \neq i_2$ in $\{1,\dots,N\}$ such that $x \in \clos(\clos(E_{0,i_j}) \setminus (U \cup \partial\Gamma_0))$ for $j=1,2$;
        \item \label{itm:initial_continuity}
            $\haus^n(\Gamma_0 \setminus \bigcup_{i=1}^N \partial^\ast E_{0,i}) = 0$, where $\partial^\ast E_{0,i}$ is the reduced boundary of $E_{0,i}$.
    \end{enumerate}
\end{assumption}

Conditions~\ref{itm:initial_U}--\ref{itm:initial_partial} are identical to those in \cite[Assumption~1.1]{ST21}.
By \cite[Theorem~2.2]{ST21}, these conditions ensure the existence of a Brakke flow $\{V_t\}_{t \geq 0}$ with fixed boundary $\partial\Gamma_0$, and $\wt{V_0} = \haus^n\lfloor_{\Gamma_0}$.
Condition~\ref{itm:initial_continuity} further guarantees the continuity of the flow at the initial time, namely, $\wt{V_0} = \lim_{t \to 0}\wt{V_t}$.

The next assumption plays a central role in proving dynamical instability.

\begin{assumption}
    \label{asm:tangent_cone}
    Let $\Theta_0 > 1$, $\mu \in (0,1)$, and let $U$, $\Gamma_0$, and $\{E_{0,i}\}_{i=1}^N$ satisfy Assumption~\ref{asm:initial_surface} and $0 \in \Gamma_0$.
    Set $V_0 = \var(\Gamma_0,1)$.
    We suppose that
    \begin{enumerate}
        \item \label{itm:tangent_cone_stationarity}
            $V_0$ is a stationary varifold with $\spt\wt{V_0} = \Gamma_0$;
        \item \label{itm:tangent_cone_density}
            $1 < \Theta^n(\wt{V_0},0) \leq \Theta_0$;
        \item \label{itm:tangent_cone_distance}
            There exists an $n$-dimensional plane $T \in \G(n+1,n)$ such that
            \begin{equation}
                \label{eq:tangent_cone}
                \limsup_{R \to 0} R^{-n-2}\int_{U_R} \abs{T^\perp x}^2 \,d\wt{V_0}(x) \leq \mu^2.
            \end{equation}
    \end{enumerate}
\end{assumption}

\begin{remark}
    \label{rem:assumption}
    For a stationary $V_0 \in \IV_n(U)$, \eqref{eq:tangent_cone} holds if and only if $\VarTan(V_0,0)$ is contained in the set of all stationary cones $\mathbf{C} \in \IV_n(\R^{n+1})$ satisfying
    \begin{equation*}
        \int_{U_1} \abs{T^\perp x}^2 \,d\wt{\mathbf{C}}(x) \leq \mu^2.
    \end{equation*}
    In particular, Assumption~\ref{asm:tangent_cone} does not require the uniqueness of tangent cones.
\end{remark}

The following is the main theorem of this paper.

\begin{theorem}
    \label{thm:main}
    For any $\Theta_0 > 1$, there exists $\mu_0 = \mu_0(n,\Theta_0) \in (0,1)$ such that if Assumption~\ref{asm:tangent_cone} holds for $\mu = \mu_0$, then there exists a Brakke flow $\{V_t\}_{t \geq 0}$ with fixed boundary $\partial\Gamma_0$ such that
    \begin{enumerate}
        \item $\lim_{t \searrow 0} \wt{V_t} = \wt{V_0} = \haus^n \lfloor_{\Gamma_0}$;
        \item $\wt{V_t}(U) < \wt{V_0}(U)$ for all $t > 0$.
    \end{enumerate}
\end{theorem}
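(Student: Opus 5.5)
We follow the strategy of \cite{ST25}, adapted as in the introduction. For each $\varepsilon>0$ we build an approximation $\Gamma_0^\varepsilon$ of $\Gamma_0$ by \emph{hole nucleation} near the origin. Concretely, we remove from $\Gamma_0$ a small neighbourhood of $0$ of size comparable to $\varepsilon$ and merge two of the adjacent open sets $E_{0,i}$ there. This is the construction of Section~\ref{sec:hole_nucleation}, and it makes the density ratio of $\Gamma_0^\varepsilon$ in $U_{\varepsilon}$ at most about $1$ plus a small error. The perturbation is arranged so that $\haus^n(\Gamma_0^\varepsilon)\to\haus^n(\Gamma_0)$ and Assumption~\ref{asm:initial_surface} still holds. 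By \cite[Theorem~2.2]{ST21} we then obtain Brakke flows $\{V^\varepsilon_t\}_{t\ge0}$ with fixed boundary $\partial\Gamma_0$ starting from $\Gamma_0^\varepsilon$. Brakke's compactness theorem gives a subsequential limit flow $\{V_t\}$ with fixed boundary $\partial\Gamma_0$. Condition~\ref{itm:initial_continuity}, together with the mass convergence, yields $\lim_{t\searrow0}\wt{V_t}=\wt{V_0}$.

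For non-triviality, the key quantitative claim is the following. There are $\delta\in(0,1)$ (with $1+\delta<\Theta^n(\wt{V_0},0)$) and $t_0>0$, independent of $\varepsilon$, such that
\begin{equation*}
\frac{\wt{V^\varepsilon_t}(U_{\sqrt{t+\varepsilon^2}})}{\omega_n (t+\varepsilon^2)^{n/2}}\le 1+\delta \qquad\text{for } t\in[0,t_0].
\end{equation*}
Passing to the limit gives a density ratio of at most $1+\delta$ in $U_{\sqrt t}$ for small $t>0$. Huisken's monotonicity (Section~\ref{sec:monotonicity}) then shows that the Gaussian density at $(0,0)$ dropped from $\Theta^n(\wt{V_0},0)>1+\delta$, so mass was genuinely lost. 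This gives $\wt{V_t}(U)<\wt{V_0}(U)$ for small $t$, and, since mass is non-increasing, for all $t>0$.

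To prove the claim we let $T_\varepsilon$ be the supremum of the times up to which the ratio stays $\le1+\delta$, and argue by contradiction, supposing $\liminf_{\varepsilon\to 0} T_\varepsilon=0$ along a subsequence. The expanding holes lemma (Section~\ref{sec:expanding_holes}) gives $T_\varepsilon>0$ and controls the ratio in terms of earlier values plus an $L^2$-distance error. We rescale parabolically by $\sqrt{T_\varepsilon+\varepsilon^2}$, so that the critical time becomes comparable to $1$, and extract a limit Brakke flow on $\R^{n+1}$. In the limit the density ratio at the normalized time in the unit ball equals exactly $1+\delta$. Meanwhile, the monotonicity-based estimate of Section~\ref{sec:monotonicity} transfers \eqref{eq:tangent_cone} to all times: the scale-invariant $L^2$ distance of the rescaled flow from $T$ stays $\lesssim\mu_0^2$, with constants depending on $n$ and $\Theta_0$ via mass bounds. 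Integrality and the small-tilt/small-height estimate for integral varifolds with controlled $\int|h|^2$ (available for a.e.\ time) force the density ratio in the unit ball to be within $C(n,\Theta_0)\mu_0^{\alpha}$ of an integer. Continuity from the start, where the ratio is near $1$, selects the integer $1$. Choosing $\mu_0$ small relative to $\delta$ then contradicts the ratio reaching $1+\delta$.

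The main obstacle is this integrality step. Near-integer density must be extracted at the specific threshold time, and not merely at almost every time. One also needs enough uniform $L^2$ control of $h$ in the rescaled limit to apply an Allard/Brakke-type height-excess-to-density argument. Our first attempt will be to average over a short time window and use the Brakke inequality to pass the ratio bound between nearby times, with error controlled by the monotone Gaussian quantities.
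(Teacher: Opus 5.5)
Your architecture (hole nucleation, a threshold time $T_\varepsilon$, parabolic blow-up at $T_\varepsilon$, integrality of a near-planar limit, expanding holes) is the paper's, but as written it has genuine gaps.

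First, you fix $\delta$ with $1+\delta<\Theta^n(\wt{V_0},0)$ and then take $\mu_0$ small relative to $\delta$. Nothing in your argument prevents $\Theta^n(\wt{V_0},0)$ from being arbitrarily close to $1$, so your $\mu_0$ depends on $\Gamma_0$, not only on $(n,\Theta_0)$ as the statement requires. The missing input is Allard's regularity theorem: a stationary integral varifold with density $>1$ at a point has density $\geq 1+\eta(n)$ there. This is how the paper fixes $\alpha=1+\eta/2$ (and the cutoff parameter $\zeta$) depending on $n$ alone.

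Second, a threshold defined through sharp balls $U_{\sqrt{t+\varepsilon^2}}$ does not work as stated. Brakke's inequality cannot be tested against indicators. The ratio need not equal $1+\delta$ at $T_\varepsilon$, since mass sitting on the growing sphere can make it jump up immediately afterwards. And ball masses are only semicontinuous under the weak convergence used in the blow-up. The paper thresholds the smooth quantity $\wt{V^\varepsilon_t}(\Phi_\varepsilon(\cdot,t))\leq\alpha\c$. It then uses Lemma~\ref{lem:expanding_holes} both to get equality at $T_\varepsilon$ and to get $T_\varepsilon\geq\delta_0\varepsilon^2$ (Lemma~\ref{lem:rough_bound}). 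You need this quantitative bound, not just $T_\varepsilon>0$: otherwise, after your rescaling by $\sqrt{T_\varepsilon+\varepsilon^2}$, the critical time $T_\varepsilon/(T_\varepsilon+\varepsilon^2)$ may tend to $0$, where no a.e.-time integrality is available.

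Third, the step you identify as the main obstacle is left open, and parts of your sketch point the wrong way. You never need near-integrality at the threshold time itself. Nor do you need a rate $C\mu_0^\alpha$, which you have not justified; compactness yields only a qualitative modulus, and that suffices. The paper picks a good time $t_0\in[1/2,1]$ at which $\tilde V_{t_0}$ is integral with $\int\abs{h}^2\leq c(n,\Theta_0,\delta_0)$, and applies Lemma~\ref{lem:gap}. There a compactness limit is $\var(T,\theta)$ with $\theta$ constant, and the a priori bound $\leq\alpha\c<2\c$ (valid at all times before the normalized threshold) forces $\theta\leq 1$. This bound, not ``continuity from the start'', selects the integer; Brakke flows need not be continuous in time. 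The paper then propagates this upper bound forward from $t_0$ to $1$ with Lemma~\ref{lem:expanding_holes}. The error is at most $c\mu^2$, because the excess bound $c\mu^2(1+t/R^2)$ at $R=\sqrt{t+\lambda^2}$ makes the integrand $\lesssim\mu^2/(t+\lambda^2)$, which is integrable on $[1/2,1]$. This contradicts equality at time $1$.

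Finally, a drop of a Gaussian or local density ratio does not by itself give loss of total mass. The paper argues that if $\wt{V_{t_0}}(U)=\wt{V_0}(U)$, then \eqref{eq:energy_ineq} forces $h(V_t,\cdot)=0$ for a.e.\ $t\leq t_0$. Hence $t\mapsto\wt{V_t}(\phi)$ is non-increasing for every $\phi\in C^1_c(U;[0,\infty))$. Conservation of total mass then contradicts $\wt{V_s}(\Phi_0(\cdot,s))<\wt{V_0}(\Phi_0(\cdot,s))$ for small $s>0$.
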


\section{Hole nucleation}
\label{sec:hole_nucleation}

As a first step in the proof of the main theorem, we construct a modified initial surface $\Gamma^\varepsilon_0$ by, roughly speaking, creating a hole at the origin.
The construction of $\Gamma^\varepsilon_0$ is the same as that in \cite[Lemma~4.1]{ST25}.
However, due to the different assumption on $\Gamma_0$, we show analogous properties of $\Gamma^\varepsilon_0$ in a different form.

\begin{lemma}
    \label{lem:hole_nucleation}
    There exists $\mu_1 = \mu_1(n) \in (0,1)$ with the following property.
    Suppose that $\Theta_0$, $\mu$, $U$, $\Gamma_0$, $\{E_{0,i}\}_{i=1}^N$, and $T$ are as in Assumption~\ref{asm:tangent_cone} and $\mu \in (0,\mu_1)$.
    Then there exist $\varepsilon_0 = \varepsilon_0(\mu,\Gamma_0,T) \in (0,\dist(0,\partial U)/2)$ and $c = c(n) \in (0,\infty)$ such that, for all $\varepsilon \in (0,\varepsilon_0]$, there exist a relatively closed and countably $n$-rectifiable set $\Gamma^\varepsilon_0 \subset U$, a family $\{E^\varepsilon_{0,i}\}_{i=1}^N$ of pairwise disjoint non-empty open subsets of $U$ with finite perimeter such that:
    \begin{enumerate}
        \item \label{itm:hole_nucleation_1}
            $\Gamma^\varepsilon_0 \setminus U_{2\varepsilon} = \Gamma_0 \setminus U_{2\varepsilon}$ and $E^\varepsilon_{0,i} \setminus U_{2\varepsilon} = E_{0,i} \setminus U_{2\varepsilon}$ for each $i = 1,\dots,N$;
        \item \label{itm:hole_nucleation_2}
            $\Gamma^\varepsilon_0 = U \setminus \bigcup_{i=1}^N E^\varepsilon_{0,i}$;
        \item \label{itm:hole_nucleation_excess}
            For all $R \in (0,\varepsilon_0]$, we have
            \begin{equation}
                \label{eq:hole_nucleation_excess}
                R^{-n-2}\int_{\Gamma^\varepsilon_0 \cap U_R} \abs{T^\perp x}^2 \,d\haus^n(x) \leq c(n)\mu^2;
            \end{equation}
        \item \label{itm:hole_nucleation_density}
            For all $R \in (0,\varepsilon_0]$, we have
            \begin{equation*}
                \label{eq:hole_nucleation_density}
                \frac{\haus^n(\Gamma^\varepsilon_0 \cap U_R)}{\omega_nR^n} \leq c(n)\Theta_0;
            \end{equation*}
        \item \label{itm:hole_nucleation_5}
            $\{\abs{Tx} < \varepsilon\} \cap U_{2\varepsilon} \cap \Gamma^\varepsilon_0 \subset T$.
    \end{enumerate}
\end{lemma}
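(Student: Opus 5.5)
We follow the construction of \cite[Lemma~4.1]{ST25} and then verify the estimates directly from Assumption~\ref{asm:tangent_cone}. First, choose $\varepsilon_0$ so small that, by \eqref{eq:tangent_cone} and the monotonicity formula for the stationary $V_0$, we have for all $R \in (0,4\varepsilon_0]$
\begin{equation*}
R^{-n-2}\int_{\Gamma_0\cap U_R}\abs{T^\perp x}^2\,d\haus^n \le 2\mu^2, \qquad \frac{\haus^n(\Gamma_0\cap U_R)}{\omega_nR^n}\le 2\Theta_0 .
\end{equation*}
The second bound holds because the density ratio of $V_0$ is monotone and tends to $\Theta^n(\wt{V_0},0)\le\Theta_0$; we also require $4\varepsilon_0<\dist(0,\partial U)$. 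Given $\varepsilon\le\varepsilon_0$, consider the slab-cylinder $D_\varepsilon=\{\abs{Tx}<\varepsilon\}\cap U_{2\varepsilon}$, or a slightly adjusted region whose radius is picked in $(\varepsilon,3\varepsilon/2)$ by a coarea/averaging argument so that slices of $\Gamma_0$ have controlled $\haus^{n-1}$ measure. Inside this region we delete $\Gamma_0$ and insert a piece of the plane $T$, together with a portion of the lateral boundary of the region. Concretely, we reassign the regions $E_{0,i}\cap D_\varepsilon$ so that one phase fills the part above $T$ and another the part below, and we keep only those pieces of lateral boundary needed to separate the phases consistently with their traces on $\partial D_\varepsilon$. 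We then set $\Gamma^\varepsilon_0=U\setminus\bigcup_i E^\varepsilon_{0,i}$. This makes \ref{itm:hole_nucleation_1}, \ref{itm:hole_nucleation_2} and \ref{itm:hole_nucleation_5} hold by construction, while finite perimeter and rectifiability are inherited from $\Gamma_0$ and the inserted smooth pieces.

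For \ref{itm:hole_nucleation_excess} and \ref{itm:hole_nucleation_density}, we split into cases according to $R$. If $R\le\varepsilon$, then within $U_R$ the set $\Gamma^\varepsilon_0$ consists only of $T\cap U_R$. The excess vanishes there, and the density ratio is at most $1$. If $R\ge 4\varepsilon$, then $\Gamma^\varepsilon_0\cap U_R$ differs from $\Gamma_0\cap U_R$ only by added pieces inside $U_{2\varepsilon}$. These pieces have $\haus^n$-measure at most $C(n)\varepsilon^n$, plus the lateral pieces. Their contribution to the excess integral is bounded by $\int_{\text{added}}\abs{T^\perp x}^2$, which vanishes on $T$. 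On the lateral boundary, where $\abs{T^\perp x}$ may be as large as $2\varepsilon$, we bound the contribution instead by the measure of the lateral piece times $\varepsilon^2$. In the intermediate range $\varepsilon<R<4\varepsilon$, we compare with the ball $U_{4\varepsilon}$ at the cost of a factor $4^{n+2}$.

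The main obstacle is controlling the lateral (cylindrical) pieces. Their area must be bounded by $C\mu^2\varepsilon^n$ for the excess, or at least by $C\Theta_0\varepsilon^n$ for the density, with the excess contribution still $\lesssim\mu^2\varepsilon^{n+2}$. We handle this as follows. The lateral wall is only needed over points $y$ of $\partial D$ where the phase differs from the one assigned above or below $T$. Inside the slab $\{\varepsilon/2<\abs{T^\perp x}<\varepsilon\}$, such a $y$ must be separated from the far side by $\Gamma_0$. Chebyshev's inequality gives
\begin{equation*}
\haus^n\bigl(\Gamma_0\cap U_{2\varepsilon}\cap\{\abs{T^\perp x}>\varepsilon/2\}\bigr)\le 4\varepsilon^{-2}\cdot 2\mu^2(2\varepsilon)^{n+2}\lesssim\mu^2\varepsilon^n,
\end{equation*}
and a projection/slicing argument, choosing good heights by averaging, then bounds the wall area by $C(n)\mu^2\varepsilon^n$ plus area comparable to $\Gamma_0$ itself. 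The smallness $\mu<\mu_1(n)$ is used here to guarantee that a single consistent choice of upper and lower phase exists, i.e.\ that $\Gamma_0$ does not wrap across the wall substantially. With these bounds, both estimates follow with $c(n)$ absorbing the constants and the factor $2$ from the choice of $\varepsilon_0$.
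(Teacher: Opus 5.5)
There is a genuine gap, and it sits where you invoke $\mu<\mu_1(n)$. You never prove a pointwise height bound for $\Gamma_0$ near $0$. Without one, no surgery can satisfy (1), (2) and (5) simultaneously.

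Write $T=\R^n\times\{0\}$. The half-cylinders $D^\pm=\{\abs{Tx}<\varepsilon,\ \pm x_{n+1}>0\}\cap U_{2\varepsilon}$ are connected and, by (2) and (5), each lies in a single $E^\varepsilon_{0,i}$. By (1), a point of the cap $\partial U_{2\varepsilon}\cap\{\abs{Tx}<\varepsilon\}$ that lies in $E_{0,j}$ also lies in the open set $E^\varepsilon_{0,j}$, which therefore meets $D^+$ or $D^-$. So if a sheet of $\Gamma_0$ crosses these caps and separates two phases, no admissible $\Gamma^\varepsilon_0$ exists, whatever walls you add. These caps are fixed by the statement, so adjusting the cylinder radius does not help.

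Your Chebyshev bound $\haus^n(\Gamma_0\cap U_{2\varepsilon}\cap\{\abs{x_{n+1}}>\varepsilon/2\})\lesssim\mu^2\varepsilon^n$ does not exclude such sheets: ``does not wrap across substantially'' has to be ``not at all''. The missing step is the $L^2$-to-$L^\infty$ estimate coming from stationarity. Since $V_0$ is stationary and integral with $\spt\wt{V_0}=\Gamma_0$, monotonicity gives $\haus^n(\Gamma_0\cap U_\rho(y))\ge\omega_n\rho^n$ for $y\in\Gamma_0$. Hence a point $y\in\Gamma_0\cap U_{2\varepsilon}$ at height $h$ forces $\omega_n(h/2)^{n+2}\le\int_{\Gamma_0\cap U_{3\varepsilon}}\abs{x_{n+1}}^2\,d\haus^n\le 2\mu^2(3\varepsilon)^{n+2}$. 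So $\Gamma_0\cap U_{2\varepsilon}\subset\{\abs{x_{n+1}}\le\varepsilon/20\}$ once $\mu<\mu_1(n)$. This is the first thing the paper establishes, via Allard's estimates. It is what confines every modification to a thin slab and makes the upper and lower phases well defined.

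Even granting the height bound, your excess estimate for the inserted lateral pieces does not close. You bound their contribution by (area)$\times\varepsilon^2$, with area $\le C\mu^2\varepsilon^n$ plus ``area comparable to $\Gamma_0$''. That yields only $C\Theta_0\varepsilon^{n+2}$, which for $R\approx 4\varepsilon$ is not $\le c(n)\mu^2R^{n+2}$. The added surface has to be controlled in the height-weighted sense.

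The paper does this without cut-and-paste. It takes the Lipschitz map $\g_\varepsilon$ of Stuvard--Tonegawa, which has these properties:
\begin{itemize}
\item $\Lip\g_\varepsilon\le 2$;
\item $\g_\varepsilon$ is the identity on $A_\varepsilon=\{\abs{x_{n+1}}\ge\varepsilon/5\}\cup\{\abs{x_{n+1}}\le\abs{Tx}-\varepsilon\}$;
\item it sends $\Gamma_0\setminus A_\varepsilon$ into the flat disk plus a thin conical collar;
\item $\Gamma^\varepsilon_0\subset\g_\varepsilon(\Gamma_0)$.
\end{itemize}
The area formula then gives $\int_{\g_\varepsilon(\Gamma_0\setminus A_\varepsilon)}\abs{x_{n+1}}^2\,d\haus^n\le 2^{n+2}\int_{\Gamma_0\setminus A_\varepsilon}\abs{x_{n+1}}^2\,d\haus^n$. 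So (3) and (4) follow directly from the corresponding bounds for $\Gamma_0$ on $U_{2R}$.

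If you want to keep your surgery, it can be repaired once the sup bound is available:
\begin{itemize}
\item By coarea, pick $r\in(\varepsilon,3\varepsilon/2)$ with $\int_{\Gamma_0\cap\{\abs{Tx}=r\}}\abs{x_{n+1}}^2\,d\haus^{n-1}\lesssim\mu^2\varepsilon^{n+1}$.
\item Over each point of the $(n-1)$-sphere of radius $r$ in $T$, erect the wall only from height $0$ up to the highest point, and down to the lowest point, of $\Gamma_0$ on that vertical line.
\item Project the slice onto that sphere. Because the wall stays below height $\varepsilon/20$, its excess is $\lesssim\varepsilon\int_{\Gamma_0\cap\{\abs{Tx}=r\}}\abs{x_{n+1}}^2\,d\haus^{n-1}\lesssim\mu^2\varepsilon^{n+2}$.
\end{itemize}
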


\begin{proof}
    Without loss of generality, we may assume that $T = \R^n \times \{0\}$, and we write $x = (x',x_{n+1}) \in \R^{n+1} = T \oplus T^\perp$.
    By \cite[7.5 (2) and (6)]{All72} and Assumption~\ref{asm:tangent_cone}, there exist $\mu_1 = \mu_1(n) \in (0,1)$ and $\varepsilon_0 = \varepsilon_0(\mu,\Gamma_0,T) \in (0,1)$ such that for all $\varepsilon, R \in (0,2\varepsilon_0]$, we have
    \begin{gather}
        \label{eq:hole_nucleation_1}
        \Gamma_0 \cap U_{2\varepsilon} \subset \{\abs{x_{n+1}} \leq \varepsilon/20\}, \\
        \label{eq:hole_nucleation_excess_0}
        R^{-n-2}\int_{\Gamma_0 \cap U_R} \abs{x_{n+1}}^2 \,d\haus^n(x) \leq 2\mu^2, \\
        \label{eq:hole_nucleation_density_0}
        \haus^n(\Gamma_0 \cap U_R) \leq 2\Theta_0\omega_nR^n.
    \end{gather}
    Let $\g \colon \R^{n+1} \to \R^{n+1}$, $\Gamma^\varepsilon_0$, and $\{E^\varepsilon_{0,i}\}_{i=1}^N$ be as in \cite[Lemma~4.1]{ST25}.
    The proofs of \ref{itm:hole_nucleation_1}, \ref{itm:hole_nucleation_2}, and \ref{itm:hole_nucleation_5} are identical to those given in \cite[Lemma~4.1]{ST25}, so we only prove \ref{itm:hole_nucleation_excess} and \ref{itm:hole_nucleation_density}.
    Define $\g_\varepsilon(x) = \varepsilon\g(x/\varepsilon)$ for $x \in \R^{n+1}$, and let
    \begin{align*}
        A &= \{(x',x_{n+1}) : \abs{x_{n+1}} \geq 1/5\} \cup \{(x',x_{n+1}) : \abs{x_{n+1}} \leq \abs{x'} - 1\}, \\
        B &= \{(x',0) : \abs{x'} \leq 1\} \cup \{(x',x_{n+1}) : \abs{x'}-1 = \abs{x_{n+1}} \leq 1/5\}.
    \end{align*}
    Set $A_\varepsilon = \eta_{0,1/\varepsilon}(A)$ and $B_\varepsilon = \eta_{0,1/\varepsilon}(B)$.
    By \eqref{eq:hole_nucleation_1} and \cite[(4.2)--(4.7)]{ST25}, we have
    \begin{equation}
        \label{eq:hole_nucleation_2}
        \Lip\g_\varepsilon \leq 2, \qquad \g_\varepsilon|_{A_\varepsilon} = \id_{A_\varepsilon}, \qquad \g_\varepsilon(\Gamma_0 \setminus A_\varepsilon) \subset B_\varepsilon, \qquad \Gamma^\varepsilon_0 \subset \g_\varepsilon(\Gamma_0).
    \end{equation}
    We first prove \ref{itm:hole_nucleation_density}.
    Since there exists $c = c(n) > 0$ such that $\haus^n(B_\varepsilon \cap U_R) \leq c(n)\omega_nR^n$ for all $R > 0$, \eqref{eq:hole_nucleation_density_0} and \eqref{eq:hole_nucleation_2} yield that
    \begin{align*}
        \frac{\haus^n(\Gamma^\varepsilon_0 \cap U_R)}{\omega_nR^n} &\leq \frac{\haus^n(\g_\varepsilon(\Gamma_0) \cap U_R)}{\omega_nR^n} \leq \frac{\haus^n(\g_\varepsilon(\Gamma_0 \cap A_\varepsilon) \cap U_R)}{\omega_nR^n} + \frac{\haus^n(\g_\varepsilon(\Gamma_0 \setminus A_\varepsilon) \cap U_R)}{\omega_nR^n} \\
        &\leq \frac{\haus^n(\Gamma_0 \cap A_\varepsilon \cap U_R)}{\omega_nR^n} + \frac{\haus^n(B_\varepsilon \cap U_R)}{\omega_nR^n} \leq 2\Theta_0 + c(n) \leq c(n)\Theta_0.
    \end{align*}
    for all $R \in (0,\varepsilon_0]$.
    We next prove \ref{itm:hole_nucleation_excess}.
    Fix $R \in (0,\varepsilon_0]$.
    If $R < \varepsilon$, then the left-hand side of \eqref{eq:hole_nucleation_excess} is $0$.
    Hence it suffices to consider the case $R \geq \varepsilon$.
    Since $\R^{n+1} \setminus A_\varepsilon \subset U_{2\varepsilon} \subset U_{2R}$, it follows from \eqref{eq:hole_nucleation_excess_0} and \eqref{eq:hole_nucleation_2} that 
    \begin{align*}
        \int_{\Gamma^\varepsilon_0 \cap U_R} \abs{x_{n+1}}^2 \,d\haus^n(x) &\leq \int_{\g_\varepsilon(\Gamma^\varepsilon_0 \cap A_\varepsilon) \cap U_R} \abs{x_{n+1}}^2 \,d\haus^n(x) + \int_{g_\varepsilon(\Gamma_0 \setminus A_\varepsilon) \cap U_R} \abs{x_{n+1}}^2 \,d\haus^n(x) \\
        &\leq \int_{\Gamma^\varepsilon_0 \cap A_\varepsilon \cap U_R} \abs{x_{n+1}}^2 \,d\haus^n(x) + 2^{n+2}\int_{\Gamma_0 \setminus A_\varepsilon} \abs{x_{n+1}}^2 \,d\haus^n(x) \\
        &\leq (1+2^{n+2})\int_{\Gamma_0 \cap U_{2R}} \abs{x_{n+1}}^2 \,d\haus^n(x) \leq c(n)\mu^2R^{n+2}.
    \end{align*}
    This completes the proof.
\end{proof}

Since $\Gamma^\varepsilon_0$ also satisfies Assumption~\ref{asm:initial_surface}, we can apply the existence theorem for Brakke flows with fixed boundary to obtain a Brakke flow starting from $\Gamma^\varepsilon_0$.

\begin{proposition}[{\cite[Theorems~2.2 and 2.3]{ST21}}]
    \label{prop:existence}
    With $\Gamma^\varepsilon_0$ and $\{E^\varepsilon_{0,i}\}_{i=1}^N$ given in Lemma~\ref{lem:hole_nucleation}, there exists a Brakke flow $\{V^\varepsilon_t\}_{t \geq 0}$ with fixed boundary $\partial\Gamma_0$ and $\wt{V^\varepsilon_0} = \haus^n\lfloor_{\Gamma^\varepsilon_0}$ such that
    \begin{equation*}
        \wt{V^\varepsilon_t}(U) + \int_0^t\int_U \abs{h(V^\varepsilon_t,\cdot)}^2 \,d\wt{V^\varepsilon_t}dt \leq \wt{V^\varepsilon_0}(U)
    \end{equation*}
    for all $t \geq 0$.
    For each $i = 1,\dots,N$, there exists a one-parameter family $\{E^\varepsilon_i(t)\}_{t \geq 0}$ of open sets $E^\varepsilon_i(t) \subset U$ with the properties described in \cite[Theorem~2.3]{ST21}.
\end{proposition}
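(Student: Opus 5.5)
This proposition is quoted from \cite[Theorems~2.2 and 2.3]{ST21}, so the proof consists of checking that the modified data $(U,\Gamma^\varepsilon_0,\{E^\varepsilon_{0,i}\}_{i=1}^N)$ satisfy Assumption~\ref{asm:initial_surface} \ref{itm:initial_U}--\ref{itm:initial_partial}, which are exactly the hypotheses of \cite[Assumption~1.1]{ST21}. Once they hold, \cite[Theorem~2.2]{ST21} gives a Brakke flow with fixed boundary $\partial\Gamma^\varepsilon_0$ and $\wt{V^\varepsilon_0}=\haus^n\lfloor_{\Gamma^\varepsilon_0}$. That theorem also gives the energy inequality: the flow is built as a limit of phase-field or time-discrete approximations whose total mass plus dissipation is nonincreasing, and lower semicontinuity passes this to the limit. \cite[Theorem~2.3]{ST21} then gives the families $E^\varepsilon_i(t)$.

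I would verify the hypotheses in the following order.
\begin{enumerate}
\item Condition \ref{itm:initial_U} concerns only $U$, so it is unchanged.
\item For \ref{itm:initial_Gamma}, use Lemma~\ref{lem:hole_nucleation}\ref{itm:hole_nucleation_2}. It gives $\Gamma^\varepsilon_0=U\setminus\bigcup_i E^\varepsilon_{0,i}$, and since the $E^\varepsilon_{0,i}$ are open, $\Gamma^\varepsilon_0$ is relatively closed. The set is countably $n$-rectifiable by the lemma. Finite $\haus^n$-measure follows from $\Gamma^\varepsilon_0\subset\g_\varepsilon(\Gamma_0)$ with $\Lip\g_\varepsilon\le 2$, giving $\haus^n(\Gamma^\varepsilon_0)\le 2^n\haus^n(\Gamma_0)<\infty$.
\item Condition \ref{itm:initial_E} is the content of Lemma~\ref{lem:hole_nucleation} itself, since the $E^\varepsilon_{0,i}$ are non-empty, open, pairwise disjoint, and have union $U\setminus\Gamma^\varepsilon_0$.
\item For the boundary condition \ref{itm:initial_partial}, use Lemma~\ref{lem:hole_nucleation}\ref{itm:hole_nucleation_1} together with $\varepsilon\le\varepsilon_0<\dist(0,\partial U)/2$. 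This gives $\clos(U_{2\varepsilon})\subset U$, so nothing near $\partial U$ changes. Hence $\clos(\Gamma^\varepsilon_0)\setminus U=\clos(\Gamma_0)\setminus U=\partial\Gamma_0$, and $\clos(E^\varepsilon_{0,i})$ agrees with $\clos(E_{0,i})$ in a neighbourhood of $\partial U$. The two-sided boundary condition for $\Gamma_0$ therefore transfers verbatim, and the fixed boundary of the flow is $\partial\Gamma_0$.
\end{enumerate}

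The one point needing care is the initial condition $\wt{V^\varepsilon_0}=\haus^n\lfloor_{\Gamma^\varepsilon_0}$ with the energy inequality starting from this mass. The inequality needs $\haus^n(\Gamma^\varepsilon_0\setminus\bigcup_i\partial^\ast E^\varepsilon_{0,i})=0$, the analogue of Assumption~\ref{asm:initial_surface}\ref{itm:initial_continuity}. Outside $U_{2\varepsilon}$ this is inherited from $\Gamma_0$. Inside $U_{2\varepsilon}$, the construction of \cite[Lemma~4.1]{ST25} makes $\Gamma^\varepsilon_0$ consist of pieces of $\Gamma_0$ (on $A_\varepsilon$) together with Lipschitz pieces of $B_\varepsilon$ separating distinct phases, and $\haus^n$-a.e.\ point of these lies in some reduced boundary. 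I expect this check to be the main (mild) obstacle, and I would settle it by citing the corresponding statement in \cite[Lemma~4.1]{ST25}. With all hypotheses verified, the cited theorems apply directly and give the flow $\{V^\varepsilon_t\}_{t\ge0}$, the energy inequality, and the families $\{E^\varepsilon_i(t)\}_{t\ge0}$.
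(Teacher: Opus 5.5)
Your proposal matches the paper, which justifies this proposition in one line: $\Gamma^\varepsilon_0$ with $\{E^\varepsilon_{0,i}\}_{i=1}^N$ again satisfies Assumption~\ref{asm:initial_surface}, so \cite[Theorems~2.2 and 2.3]{ST21} apply, and your verification of conditions \ref{itm:initial_U}--\ref{itm:initial_partial} simply spells that out. One small correction: the energy inequality and the identity $\wt{V^\varepsilon_0}=\haus^n\lfloor_{\Gamma^\varepsilon_0}$ do not rely on condition \ref{itm:initial_continuity}; what that condition gives is continuity at the initial time, $\lim_{t\searrow 0}\wt{V^\varepsilon_t}=\wt{V^\varepsilon_0}$, as the paper remarks after Assumption~\ref{asm:initial_surface}, and this is harmless here since you verify it anyway.
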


The compactness theorem for Brakke flows (see \cite{Ilm94} and \cite[Section~3.3]{Ton19}) implies the existence of a limit Brakke flow as $\varepsilon \to 0$.
The proof follows exactly as in \cite[Proposition~4.3]{ST25}.

\begin{proposition}[{\cite[Proposition~4.3]{ST25}}]
    \label{prop:limit_flow}
    For any sequence $\{\varepsilon_j\}_{j=1}^\infty \subset (0,\varepsilon_0]$ converging to $0$, there exists a subsequence (denoted by the same index) and a Brakke flow $\{V_t\}_{t \geq 0}$ with fixed boundary $\partial\Gamma_0$ such that $\lim_{j \to \infty} \wt{V^{\varepsilon_j}_t} = \wt{V_t}$ in $U$ for each $t \geq 0$, $\lim_{t \searrow 0} \wt{V_t} = \wt{V_0} = \haus^n\lfloor_{\Gamma_0}$, and
    \begin{equation}
        \label{eq:energy_ineq}
        \wt{V_t}(U) + \int_0^t\int_U \abs{h(V_t,\cdot)}^2 \,d\wt{V_t}dt \leq \wt{V_0}(U)
    \end{equation}
    for all $t \geq 0$.
\end{proposition}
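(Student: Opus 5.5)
The statement to prove is Proposition~\ref{prop:limit_flow}. I would follow the standard compactness route, as in \cite[Proposition~4.3]{ST25}, in three steps: local mass bounds, extraction of a limit flow with the boundary condition, and the energy inequality together with initial continuity.

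\emph{Uniform bounds and a limit flow.} By Proposition~\ref{prop:existence}, each flow $\{V^{\varepsilon_j}_t\}_{t\ge0}$ satisfies
\[
\wt{V^{\varepsilon_j}_t}(U)+\int_0^t\int_U\abs{h}^2\,d\wt{V^{\varepsilon_j}_t}\,dt\le \haus^n(\Gamma^{\varepsilon_j}_0).
\]
Lemma~\ref{lem:hole_nucleation}\ref{itm:hole_nucleation_1} and \ref{itm:hole_nucleation_density} give
\[
\haus^n(\Gamma^{\varepsilon_j}_0)\le \haus^n(\Gamma_0)+c(n)\Theta_0\omega_n(2\varepsilon_j)^n,
\]
so the masses and the $L^2$ norms of the mean curvature are bounded uniformly in $j$. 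I would then apply the compactness theorem for Brakke flows (\cite{Ilm94}, \cite[Section~3.3]{Ton19}) in $U$. It yields a subsequence and a Brakke flow $\{V_t\}$ with $\wt{V^{\varepsilon_j}_t}\to\wt{V_t}$ for every $t\ge0$; integrality for a.e.\ $t$ comes from Allard's compactness theorem together with Fatou's lemma in time. At $t=0$, item~\ref{itm:hole_nucleation_1} and the density bound give $\haus^n\lfloor_{\Gamma^{\varepsilon_j}_0}\to\haus^n\lfloor_{\Gamma_0}$, since the difference is supported in $U_{2\varepsilon_j}$ and has mass $O(\varepsilon_j^n)$.

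\emph{Fixed boundary.} For the boundary condition I would reuse the barrier arguments of \cite{ST21}. Strict convexity of $U$ and the fact that $\clos\Gamma^{\varepsilon_j}_0\setminus U=\partial\Gamma_0$ for all $j$ give uniform control of the support near $\partial U$ away from $\partial\Gamma_0$. The point is that \cite{ST21} proves the boundary estimates via comparison with explicit shrinking spheres and convex barriers, and those constants depend only on $U$ and $\partial\Gamma_0$. They are therefore uniform in $j$ and pass to the limit. The reverse inclusion, that $\partial\Gamma_0$ is contained in the closure of the support, comes from the open partitions $E^{\varepsilon_j}_i(t)$ of Proposition~\ref{prop:existence}. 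These coincide with $E_{0,i}$ near $\partial U$ at $t=0$, and by \cite[Theorem~2.3]{ST21} they keep at least two distinct phases adjacent to each boundary point. Passing this to the limit (the limits of $\chi_{E^{\varepsilon_j}_i(t)}$ in $L^1$ still have nontrivial boundary measure near each $x\in\partial\Gamma_0$) gives the claim.

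\emph{Energy inequality and initial continuity.} The energy inequality \eqref{eq:energy_ineq} follows by lower semicontinuity. Mass converges locally and is lower semicontinuous on $U$, and $\int\!\int\abs{h}^2$ is lower semicontinuous under varifold convergence with bounded first variation (\cite{Ilm94}). The right-hand sides converge to $\haus^n(\Gamma_0)=\wt{V_0}(U)$. For $\lim_{t\searrow0}\wt{V_t}=\wt{V_0}$, the Brakke inequality with a fixed test function $\phi$ gives
\[
\wt{V_t}(\phi)\le\wt{V_0}(\phi)+Ct .
\]
Combined with the energy bound $\wt{V_t}(U)\le\wt{V_0}(U)$, this gives an upper semicontinuity at $t=0$.

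The main obstacle is the matching lower bound, i.e.\ showing that no mass is lost instantly. Here I would use Assumption~\ref{asm:initial_surface}\ref{itm:initial_continuity}, as in \cite{ST25}. By $L^1$ continuity of the phases $E_i(t)$ at $t=0$ and lower semicontinuity of perimeter,
\[
\liminf_{t\to0}\wt{V_t}(\phi)\ge\sum_i\tfrac12\int\phi\,d\wt{\partial^*E_{0,i}},
\]
and the right-hand side equals $\int\phi\,d\haus^n\lfloor_{\Gamma_0}$ by condition~\ref{itm:initial_continuity} together with multiplicity one. The $L^1$ continuity itself has to come from the uniform-in-$j$ continuity estimates of the phases in \cite[Theorem~2.3]{ST21}, which are where the argument needs the most care.
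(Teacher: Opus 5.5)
Your proposal is correct and follows the same route as the paper, which simply invokes the compactness theorem for Brakke flows and defers to \cite[Proposition~4.3]{ST25}. Your three steps are exactly the ingredients of that argument: uniform mass and curvature bounds from Lemma~\ref{lem:hole_nucleation}; transfer of the boundary and phase-continuity estimates of \cite{ST21} to the limit, using that $\Gamma^{\varepsilon}_0$ agrees with $\Gamma_0$ outside $U_{2\varepsilon}$; and lower semicontinuity together with Assumption~\ref{asm:initial_surface}~\ref{itm:initial_continuity}, which give \eqref{eq:energy_ineq} and continuity at $t=0$.
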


\section{Brakke's expanding holes lemma}
\label{sec:expanding_holes}

In this section, we prove a slight modification of Brakke's expanding holes lemma \cite[Lemma~6.5]{Bra78}, which is a key ingredient in \cite{ST25}.
Fix $\zeta \in (1/2,1)$ and let $\chi \in C^2_c([0,1))$ be a cut-off function such that
\begin{equation}
    \label{eq:expanding_holes_cutoff}
    0 \leq \chi \leq 1, \qquad \chi \equiv 1 \text{ on } [0,\zeta), \qquad \abs{\chi'} + \abs{\chi''} \leq c(\zeta).
\end{equation}
For $\lambda \geq 0$, define $\Phi_\lambda$ by
\begin{equation}
    \label{eq:expanding_holes_phi}
    \Phi_\lambda(x,t) = \frac{1}{(t+\lambda^2)^{n/2}}\chi^2\biggl(\frac{\abs{x}}{\sqrt{t+\lambda^2}}\biggr).
\end{equation}
Note that, in \cite[Lemma~5.5]{ST25}, the function obtained from $\Phi_\lambda$ by replacing $x$ with $Tx$ is used instead.

The following lemma implies that the area of the hole created in Lemma~\ref{lem:hole_nucleation} grows like $t^{n/2}$ as long as the $L^2$ distance from $T$ is small.

\begin{lemma}
    \label{lem:expanding_holes}
    Let $\lambda \geq 0$, $\tau > 0$, and let $\{V_t\}_{t \in [0,\tau]}$ be a Brakke flow in $U_{\sqrt{\tau + \lambda^2}}$.
    Then, for any $0 < t_1 < t_2 \leq \tau$, we have
    \begin{equation}
        \label{eq:expanding_holes}
        \int \Phi_\lambda(x,t_2) \,d\wt{V_{t_2}}(x) \leq \int \Phi_\lambda(x,t_1) \,d\wt{V_{t_1}}(x) + c\int_{t_1}^{t_2}\int_{U_{\sqrt{t+\lambda^2}}} \frac{\abs{T^\perp x}^2}{(t+\lambda^2)^{n/2+2}} \,d\wt{V_t}(x)dt,
    \end{equation}
    where $c = c(n,\zeta) \in (0,\infty)$.
\end{lemma}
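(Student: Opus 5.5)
The plan is to insert $\phi=\Phi_\lambda$ into the Brakke inequality \eqref{eq:Brakke_ineq} on $[t_1,t_2]$. This is admissible: for $t\le\tau$ the support of $\Phi_\lambda(\cdot,t)$ lies in $\clos U_{\sqrt{t+\lambda^2}}\subset U_{\sqrt{\tau+\lambda^2}}$, and $\Phi_\lambda$ is $C^1$ for $t>0$. We then show that, for a.e.\ $t$, the integrand satisfies
\begin{equation*}
\int (-\Phi_\lambda|h|^2+\nabla\Phi_\lambda\cdot h+\partial_t\Phi_\lambda)\,d\wt{V_t}\le c\int_{U_{\sqrt{t+\lambda^2}}}\frac{|T^\perp x|^2}{(t+\lambda^2)^{n/2+2}}\,d\wt{V_t}.
\end{equation*}
Write $s=t+\lambda^2$ and $F=\chi^2$, so that $\Phi_\lambda=s^{-n/2}F(|x|/\sqrt s)$. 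Then
\begin{equation*}
\partial_t\Phi_\lambda=-\tfrac{n}{2}s^{-n/2-1}F-\tfrac12 s^{-n/2-3/2}|x|F',\qquad \nabla\Phi_\lambda=s^{-n/2-1/2}F'\,x/|x|.
\end{equation*}
The basic relation $F'=2\chi\chi'$ gives $|F'|^2\le c(\zeta)F$. We also use that $F'$ is supported in $\{\zeta\sqrt s\le|x|\le\sqrt s\}$, where $|x|\sim\sqrt s$.

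The key step is to rewrite $\partial_t\Phi_\lambda$ through the first variation \eqref{eq:mean_curvature}, using a vector field built from the projection $Tx$ rather than from $x$. Take $g(x)=\tfrac12 s^{-n/2-1}\psi(|x|^2/s)\,Tx$ with $\psi(r)=F(\sqrt r)$. Then
\begin{equation*}
\operatorname{div}_S g=\tfrac12 s^{-n/2-1}\psi\,(S\cdot T)+s^{-n/2-2}\psi'\,(Sx\cdot Tx).
\end{equation*}
Moreover $S\cdot T=n-\tfrac12|S-T|^2\le n$ for $S,T\in\G(n+1,n)$. So $\partial_t\Phi_\lambda+\operatorname{div}_S g$ equals the sign-definite term $-\tfrac14 s^{-n/2-1}\psi|S-T|^2\le0$, which we discard, plus terms carrying $\psi'$. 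Using $x=Tx+T^\perp x$ and $Sx=x-S^\perp x$, these $\psi'$-terms reduce to expressions linear or quadratic in $T^\perp x$ and $S^\perp x$, with coefficient $s^{-n/2-2}\psi'$.

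Integrating against $V_t$ and applying \eqref{eq:mean_curvature} turns $\int\operatorname{div}_S g$ into $-\int g\cdot h$. By Brakke's perpendicularity \eqref{eq:perpendicularity}, in both $g\cdot h$ and $\nabla\Phi_\lambda\cdot h$ only the components $S^\perp(Tx)=S^\perp x-S^\perp T^\perp x$ survive. Every remaining term then has one of three forms: $\Phi_\lambda$-weighted $|h|^2$, products of $h$ with $s^{-n/2-1}\psi'$ times $T^\perp x$ or $S^\perp x$, or products of such quantities with each other. We absorb the $h$-terms into $-\Phi_\lambda|h|^2$ by Cauchy–Schwarz, using $|F'|^2/F\le c(\zeta)$ and the fact that $|x|\sim\sqrt s$ on $\operatorname{spt}F'$. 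This leaves bounds of the form $c\,s^{-n/2-2}|T^\perp x|^2$ on $U_{\sqrt s}$.

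I expect the main obstacle to be the $S^\perp x$ terms. The naive estimate $|S^\perp x|\le|T^\perp x|+|S-T||x|$ produces $|S-T|^2s^{-n/2-1}$, which is not admissible on the right-hand side of \eqref{eq:expanding_holes}. I would try to cancel these terms against the discarded good term $-\tfrac14 s^{-n/2-1}\psi|S-T|^2$ instead of throwing it away. Since $|F'|\le c\sqrt F$, the cross terms $|S-T||x|\cdot|T^\perp x|\,s^{-n/2-2}|\psi'|$ can be split by Young's inequality into a small multiple of $\psi|S-T|^2s^{-n/2-1}$ plus $c\,s^{-n/2-2}|T^\perp x|^2$. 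To make the constants close, I would need to choose the normalisation of $g$, possibly adding a multiple of the radial field $x$, so that the pure $|S-T|^2$ terms with the bad sign appear only multiplied by $\sqrt{\psi}$-type factors. Once the pointwise inequality holds, integrating in $t$ over $[t_1,t_2]$ gives \eqref{eq:expanding_holes} with $c=c(n,\zeta)$.
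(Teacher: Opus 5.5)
Your setup coincides with the paper's: the field $g=\tfrac12 s^{-n/2-1}\psi(\abs{x}^2/s)\,Tx$ is exactly $\Phi_\lambda Tx/(2s)$, and your identity for $\partial_t\Phi_\lambda+\operatorname{div}_S g$ is correct. There is a genuine gap, though, at the step you flag yourself, and the fix you propose cannot close it.

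\textbf{Why pointwise cancellation fails.} After perpendicularity and Cauchy--Schwarz, two pure tilt terms remain.
\begin{enumerate}
\item The term $-s^{-n/2-2}\psi'\abs{S^\perp x}^2$, coming from $\psi'(Sx\cdot Tx-\abs{x}^2)$.
\item The term $\abs{S^\perp\nabla\Phi_\lambda}^2/\Phi_\lambda\approx s^{-n/2-2}(\psi'^2/\psi)\abs{S^\perp x}^2$.
\end{enumerate}
Since $S^\perp x$ contains $S^\perp Tx$ with $\abs{Tx}\approx\sqrt s$ on $\spt\psi'$, these behave like $\chi\abs{\chi'}\,s^{-n/2-1}\abs{S-T}^2$ and $\chi'^2 s^{-n/2-1}\abs{S-T}^2$. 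The first has the bad sign wherever $\psi'<0$, which must occur because $\chi$ drops from $1$ to $0$. The good term you want to cancel them against has weight $\psi=\chi^2$. Pointwise domination would need $\abs{\chi'}\le C\chi$ on $\spt\chi'$. That is impossible for a cutoff vanishing before $r=1$, since it forces $\abs{(\log\chi)'}\le C$. Weights of ``$\sqrt\psi$-type'', i.e.\ $\chi$, are still not controlled by $\chi^2$. Adding a radial field to $g$ cannot help either: $\operatorname{div}_S(\varphi x)=n\varphi+Sx\cdot\nabla\varphi$ has no $\abs{S-T}^2$ part, because the tilt contributions of $Tx$ and $T^\perp x$ cancel ($S\cdot T+S\cdot T^\perp=n$).

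\textbf{What is missing.} You need a second use of the first variation with a \emph{normal} field, i.e.\ a tilt-excess (Caccioppoli) estimate at each time. Test $\delta V_t$ with $\phi^2T^\perp x$, where $\phi=\chi'(\abs{x}/\sqrt s)$. Three facts combine here:
\begin{enumerate}
\item $\operatorname{div}_S(\phi^2T^\perp x)=\tfrac12\phi^2\abs{S-T}^2+\nabla(\phi^2)\cdot ST^\perp x$;
\item $\abs{ST^\perp x}\le\abs{S-T}\abs{T^\perp x}$;
\item $\abs{\nabla\phi}\le c(\zeta)/\sqrt s$, which is where $\chi\in C^2$ enters.
\end{enumerate}
Young's inequality then gives
\[
\int\chi'^2\abs{S-T}^2\,dV_t(x,S)\le c\int_{U_{\sqrt s}}\frac{\abs{T^\perp x}^2}{s}\,d\wt{V_t}+c\int\chi'^2\abs{T^\perp x}\abs{h}\,d\wt{V_t}.
\]
Because $\chi\ge0$ and $\abs{\chi''}\le c(\zeta)$ imply $\chi'^2\le c(\zeta)\chi$, the last integrand is at most $\delta\chi^2 s\abs{h}^2+c_\delta\abs{T^\perp x}^2/s$ on $U_{\sqrt s}$. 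After multiplying by $s^{-n/2-1}$, the curvature part is absorbed into the leftover $-\tfrac12\Phi_\lambda\abs{h}^2$. The $\chi\abs{\chi'}$-weighted term is handled by splitting $C\chi\abs{\chi'}\le\epsilon\chi^2+C^2\chi'^2/(4\epsilon)$.

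This is what the paper delegates to \cite[Lemma~5.3 and Remark~5.4]{ST25} at exactly this point. In your language, it amounts to adding a normal correction $-k\,s^{-n/2-1}\phi^2T^\perp x$ to $g$, not a radial one. Your cancellation idea does work for the $\chi^2$-weighted tilt terms. For example, $\Phi_\lambda\abs{S^\perp Tx}^2/(4s^2)\le\Phi_\lambda\abs{S-T}^2/(8s)$, coming from $g\cdot h$, is absorbed by the good term $-\Phi_\lambda\abs{S-T}^2/(4s)$. Your overall architecture, a fixed-time inequality integrated over $[t_1,t_2]$, is also fine.
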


\begin{proof}
    For simplicity, we omit the subscript $\lambda$ in $\Phi_\lambda$.
    A direct computation shows that
    \begin{align*}
        \nabla\Phi(x,t) &= \frac{2\chi\chi'}{(t+\lambda^2)^{(n+1)/2}}\frac{x}{\abs{x}}, \\
        \partial_t\Phi(x,t) &= -\frac{n\chi^2}{2(t+\lambda^2)^{(n+2)/2}} - \frac{\chi\chi'\abs{x}}{(t+\lambda^2)^{(n+3)/2}} = -\frac{n}{2(t+\lambda^2)}\Phi(x,t) - \frac{x}{2(t+\lambda^2)} \cdot \nabla\Phi(x,t).
    \end{align*}
    Hence testing Brakke's inequality \eqref{eq:Brakke_ineq} with $\Phi$ yields
    \begin{equation}
        \label{eq:expanding_holes_eq1}
        \begin{split}
            &\int \Phi(\cdot,t)\,d\wt{V_t}\bigg\rvert_{t=t_1}^{t_2} \leq \int_{t_1}^{t_2}\int (-\Phi h + \nabla\Phi) \cdot h + \partial_t\Phi \,d\wt{V_t}dt \\
            &= \int_{t_1}^{t_2}\int -\Phi\abs{h}^2 + \nabla\Phi \cdot h - \frac{n}{2(t+\lambda^2)}\Phi - \frac{x}{2(t+\lambda^2)} \cdot \nabla\Phi \,d\wt{V_t}(x)dt \\
            &= \int_{t_1}^{t_2}\int -\Phi\abs{h}^2 + \nabla\Phi \cdot h - \frac{n}{2(t+\lambda^2)}\Phi - \frac{Tx}{2(t+\lambda^2)} \cdot \nabla\Phi - \frac{T^\perp x}{2(t+\lambda^2)} \cdot \nabla\Phi \,d\wt{V_t}(x)dt.
        \end{split}
    \end{equation}
    By the definition of the generalized mean curvature, the fourth term becomes
    \begin{align*}
        &\int \frac{Tx}{2(t+\lambda^2)} \cdot \nabla\Phi \,d\wt{V_t}(x) = \int \frac{STx}{2(t+\lambda^2)} \cdot \nabla\Phi \,dV_t(x,S) + \int \frac{S^\perp Tx}{2(t+\lambda^2)} \cdot \nabla\Phi \,dV_t(x,S) \\
        &= \int S \cdot \nabla\biggl(\Phi\frac{Tx}{2(t+\lambda^2)}\biggr) \,dV_t(x,S) - \int \frac{S \cdot T}{2(t+\lambda^2)}\Phi \,d\wt{V_t} + \int \frac{S^\perp Tx}{2(t+\lambda^2)} \cdot \nabla\Phi \,dV_t(x,S) \\
        &= -\int \Phi\frac{Tx}{2(t + \lambda^2)} \cdot h \,d\wt{V_t} - \int \frac{S \cdot T}{2(t+\lambda^2)}\Phi \,d\wt{V_t} + \int \frac{S^\perp Tx}{2(t+\lambda^2)} \cdot \nabla\Phi \,dV_t(x,S).
    \end{align*}
    Plugging this into \eqref{eq:expanding_holes_eq1}, we obtain
    \begin{align*}
        \int \Phi(\cdot,t)\,d\wt{V_t}\bigg\rvert_{t=t_1}^{t_2} &\leq \int_{t_1}^{t_2}\int -\Phi\abs{h}^2 + \nabla\Phi \cdot h + \Phi\frac{Tx}{2(t + \lambda^2)} \cdot h \\
        &\quad - \frac{n - S \cdot T}{2(t + \lambda^2)}\Phi - \frac{S^\perp Tx}{2(t + \lambda^2)} \cdot \nabla\Phi - \frac{T^\perp x}{2(t+\lambda^2)} \cdot \nabla\Phi \,dV_t(x,S)dt \\
        &\leq -\frac{1}{2}\int_{t_1}^{t_2}\int \Phi\abs{h}^2 \,d\wt{V_t}dt \\
        &\quad + c\int_{t_1}^{t_2}\int \frac{\abs{S^\perp\nabla\Phi}^2}{\Phi} - \Phi\frac{S \cdot T^\perp}{t + \lambda^2} + \Phi\frac{\abs{S^\perp Tx}^2}{(t+\lambda^2)^2} + \frac{T^\perp x}{t+\lambda^2} \cdot \nabla\Phi \,dV_t(x,S)dt,
    \end{align*}
    where we used \eqref{eq:perpendicularity}.
    Since
    \begin{gather*}
        \frac{\abs{S^\perp\nabla\Phi}^2}{\Phi} \leq c(n)\abs{\chi'}^2\biggl(\frac{\abs{S-T}^2}{(t+\lambda^2)^{n/2+1}} + \frac{\abs{T^\perp x}^2}{(t+\lambda^2)^{n/2+2}}\biggr), \qquad \Phi\frac{S \cdot T^\perp}{t+\lambda^2} \leq \Phi\frac{\abs{S-T}^2}{2(t+\lambda^2)}, \\
        \Phi\frac{\abs{S^\perp Tx}^2}{(t+\lambda^2)^2} \leq \Phi\frac{\abs{S-T}^2}{t+\lambda^2}, \qquad \biggl\lvert\frac{T^\perp x}{t+\lambda^2} \cdot \nabla\Phi\biggr\rvert \leq 4\chi\abs{\chi'}\frac{\abs{T^\perp x}^2}{(t+\lambda^2)^{n/2+2}},
    \end{gather*}
    the desired inequality follows from \eqref{eq:expanding_holes_cutoff} and \cite[Lemma~5.3 and Remark~5.4]{ST25}.
\end{proof}

\section{Monotonicity formula}
\label{sec:monotonicity}

In this section, we apply Huisken's monotonicity formula to obtain uniform-in-time estimates for the density ratio and the scale invariant $L^2$ distance.
These estimates are crucial for controlling the remainder term in \eqref{eq:expanding_holes}.
For each $y \in \R^{n+1}$ and $s \in \R$, let $\varrho_{(y,s)}$ denote the $n$-dimensional backward heat kernel, defined by
\begin{equation*}
    \varrho_{(y,s)}(x,t) \coloneqq \frac{1}{(4\pi(s-t))^{n/2}}\exp\biggl(-\frac{\abs{x-y}^2}{4(s-t)}\biggr), \qquad (x,t) \in \R^{n+1} \times (-\infty,s).
\end{equation*}
The following proposition is a minor modification of \cite[Propositions~6.2 and 6.4]{KT14}.

\begin{proposition}
    \label{prop:monotonicity}
    Let $R > 0$ and $L \in [2,\infty)$.
    Suppose that $\{V_t\}_{0 \leq t \leq LR^2}$ is a Brakke flow in $U_{LR}$, and $f \in C^2(U_{LR};[0,\infty))$ is a convex function.
    Then, for all $s,t \in [0,LR^2]$ with $t < s$, we have
    \begin{multline*}
        \int_{U_{(L-1)R}} f(x)\varrho_{(0,s)}(x,t) \,d\wt{V_t}(x) \leq \int_{U_{LR}} f(x)\varrho_{(0,s)}(x,0) \,d\wt{V_0}(x) \\
        + c(n)Le^{-L/32}\biggl(\sup_{U_{LR}} (R\abs{\nabla f} + f)\biggr)\sup_{\tau \in [0,t]}\frac{\wt{V_\tau}(U_{LR})}{(LR)^n}.
    \end{multline*}
\end{proposition}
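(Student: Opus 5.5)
We test Brakke's inequality \eqref{eq:Brakke_ineq} with a localized version of $f\varrho_{(0,s)}$. This is the standard localization of Huisken's monotonicity formula from \cite[Propositions~6.2 and 6.4]{KT14}, adapted so that the convex weight $f$ is carried along. Fix a radial cut-off $\psi\in C^2_c(U_{LR};[0,1])$ with $\psi\equiv1$ on $U_{(L-1)R}$, $|\nabla\psi|\le 2/R$ and $|\nabla^2\psi|\le c/R^2$. Set $\phi(x,\tau)=\psi(x)f(x)\varrho(x,\tau)$ with $\varrho=\varrho_{(0,s)}$ and $\tau\in[0,t]$; this is an admissible test function since $t<s$.

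Plugging $\phi$ into \eqref{eq:Brakke_ineq} between times $0$ and $t$, we expand $\nabla\phi$ and $\partial_\tau\phi$ and use the first variation identity \eqref{eq:mean_curvature} together with \eqref{eq:perpendicularity} to rewrite $\nabla(\psi f)\cdot h$. The terms then regroup in the usual Huisken way. First, the completed square $-\psi f\varrho\,\bigl|h-\tfrac{S^\perp\nabla\varrho}{\varrho}\bigr|^2\le0$ appears, using $\partial_\tau\varrho+S\cdot\nabla^2\varrho+|S^\perp\nabla\varrho|^2/\varrho=0$ for $n$-planes $S$. Second, there is a term $-\varrho\, S\cdot\nabla^2 f\,\psi\le0$, and this is exactly where the convexity of $f$ is used. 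Finally, there are error terms in which at least one derivative falls on $\psi$. The mixed term $\nabla f\cdot\nabla\varrho\,\psi$ is handled through the identity $\delta V(\psi\varrho\nabla f)$, which converts it into $-\varrho\,S\cdot\nabla^2 f$ plus $h$-terms that are absorbed by the completed square. Any leftover $\psi f\varrho|h|^2$ contributions carry a favourable sign.

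All remaining errors are supported in $U_{LR}\setminus U_{(L-1)R}$ and are bounded pointwise by
\begin{equation*}
c\,(R|\nabla f|+f)\,\bigl(R^{-2}+R^{-1}|x|/(s-\tau)\bigr)\varrho(x,\tau).
\end{equation*}
On this annulus $|x|\ge (L-1)R\ge LR/2$, and $s-\tau\le LR^2$. Hence $\varrho\le c(s-\tau)^{-n/2}e^{-L^2R^2/(16(s-\tau))}$. A supremum in the variable $s-\tau\in(0,LR^2]$ gives a bound by $c\,(LR)^{-n}e^{-L/32}$ times polynomial factors in $L$, and absorbing these into a halved exponent is where the constant $32$ comes from. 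Integrating in $\tau$ over an interval of length at most $LR^2$ and bounding $\wt{V_\tau}(U_{LR})$ by its supremum yields the stated error $c(n)Le^{-L/32}\sup(R|\nabla f|+f)\sup_\tau \wt{V_\tau}(U_{LR})/(LR)^n$.

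The left side at time $t$ dominates the integral over $U_{(L-1)R}$ since $\psi\equiv1$ there. The right side at time $0$ is at most the integral over $U_{LR}$ since $\psi\le1$.

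The main obstacle is the careful bookkeeping of the cross terms between $\nabla f$, $\nabla\psi$ and $h$. These terms must have the right sign or be absorbed, with no derivative of $f$ of order two appearing except through the convexity term. We will follow \cite[Proposition~6.4]{KT14}, where the analogous computation with a general weight is carried out, and check each absorption via Cauchy--Schwarz.
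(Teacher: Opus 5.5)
Your proposal is correct and follows the same route as the paper: you test Brakke's inequality with $\eta f\varrho_{(0,s)}$, extract the completed square and the convexity term $-\eta\varrho\,S\cdot\nabla^2 f$, bound the cut-off errors on the annulus via $\varrho\le c(n)e^{-L/32}(LR)^{-n}$ (halving the exponent), and integrate over a time interval of length at most $LR^2$ to produce the factor $L$. The only difference is that in the exact identity the cross terms $f\,S(\nabla\varrho)\cdot\nabla\eta$ and $\eta\,S(\nabla f)\cdot\nabla\varrho$ cancel, so the paper's error is just $-2\varrho\,S(\nabla f)\cdot\nabla\eta - f\varrho\,S\cdot\nabla^2\eta$, with no Cauchy--Schwarz absorption and no polynomial factors in $L$ to absorb; your extra $R^{-1}\abs{x}(s-\tau)^{-1}$ term is in any case harmless, since the same Gaussian bound makes it smaller than the $R^{-2}$ term by a factor $L^{-1}$.
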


\begin{proof}
    For simplicity, we omit the subscript $(0,s)$ in $\varrho_{(0,s)}$.
    Let $\eta \in C^2_c(U_{LR})$ be a cut-off function such that
    \begin{equation}
        \label{eq:monotonicity_cutoff}
        0 \leq \eta \leq 1, \qquad \eta \equiv 1 \text{ on $U_{(L-1)R}$}, \qquad R\abs{\nabla\eta} + R^2\abs{\nabla^2\eta} \leq c(n).
    \end{equation}
    As in the proof of \cite[Proposition~6.4]{KT14}, testing Brakke's inequality \eqref{eq:Brakke_ineq} with $\phi(x,t) = f(x)\eta(x)\varrho(x,t)$ yields
    \begin{multline*}
        \int f\eta\varrho(\cdot,t) \,d\wt{V_t} - \int f\eta\varrho(\cdot,0) \,d\wt{V_0} \\
        \leq \int_0^t\int -f\eta\varrho\biggl\lvert h - \frac{S^\perp\nabla\varrho}{\varrho}\biggr\rvert^2 - \eta\varrho(S \cdot \nabla^2f) - 2\varrho S(\nabla f) \cdot \nabla\eta - f\varrho(S \cdot \nabla^2\eta) \,dV_\tau(\cdot,S)d\tau,
    \end{multline*}
    where we used the identity $S \cdot \nabla^2\varrho + \abs{S^\perp\nabla\varrho}^2/\varrho + \partial_t\varrho = 0$.
    Since the second term on the right-hand side is non-positive by the convexity of $f$, we obtain
    \begin{align*}
        \int f\eta\varrho(\cdot,t) \,d\wt{V_t} - \int f\eta\varrho(\cdot,0) \,d\wt{V_0} &\leq \int_0^t\int (2\abs{\nabla f}\abs{\nabla\eta} + f\abs{\nabla^2\eta})\varrho \,d\wt{V_\tau}d\tau \\
        &\leq c(n)\int_0^t\int_{U_{LR} \setminus U_{(L-1)R}} (R^{-1}\abs{\nabla f} + R^{-2}f)\varrho \,d\wt{V_\tau}d\tau.
    \end{align*}
    By \eqref{eq:monotonicity_cutoff}, $L \geq 2$, and $s \in [0,LR^2]$,
    \begin{align*}
        \varrho(x,\tau) &\leq \frac{1}{(4\pi(s-\tau))^{n/2}}\exp\biggl(-\frac{(L-1)^2R^2}{4(s-\tau)}\biggr) \leq c(n)(L-1)^{-n}R^{-n}\exp\biggl(-\frac{(L-1)^2R^2}{8(s-\tau)}\biggr) \\
        &\leq c(n)(L-1)^{-n}R^{-n}\exp\biggl(-\frac{(L-1)^2}{8L}\biggr) \leq c(n)e^{-L/32}(LR)^{-n}
    \end{align*}
    for all $x \in U_{LR} \setminus U_{(L-1)R}$ and $\tau \in [0,t]$, which implies the desired inequality.
\end{proof}

The following proposition is a direct consequence of Proposition~\ref{prop:monotonicity}.

\begin{proposition}
    \label{prop:time_uniform}
    Let $L \in [2,\infty)$, and let $\{V^\varepsilon_t\}_{t \geq 0}$ be the Brakke flow obtained in Proposition~\ref{prop:existence}.
    Then there exists $c = c(n) \in (0,\infty)$ such that
    \begin{gather}
        \label{eq:time_uniform_density}
        \frac{\wt{V^\varepsilon_t}(U_R)}{R^n} \leq c\Theta_0 + c\varepsilon_0^{-n}\haus^n(\Gamma_0)Le^{-L/32}, \\
        \label{eq:time_uniform_excess}
        R^{-n-2}\int_{U_R} \abs{T^\perp x}^2 \,d\wt{V^\varepsilon_t}(x) \leq c\mu^2\biggl(1 + \frac{t}{R^2}\biggr) + cR^{-2}\varepsilon_0^{-n+2}\haus^n(\Gamma_0)Le^{-L/32}
    \end{gather}
    for all $\varepsilon \in (0,\varepsilon_0]$, $t \in [0,\varepsilon_0^2/(2L)]$, and $R \in (0,\varepsilon_0/(2L)^{1/2}]$.
\end{proposition}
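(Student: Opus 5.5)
We apply Proposition~\ref{prop:monotonicity} to the flow $\{V^\varepsilon_t\}$ restricted to $U_{LR'}$ with $s$ chosen so that the Gaussian at time $t$ dominates the indicator of $U_R$. Concretely, for given $t\in[0,\varepsilon_0^2/(2L)]$ and $R\in(0,\varepsilon_0/(2L)^{1/2}]$, we set $s = t + R^2$ and take the base scale $R' = (t+R^2)^{1/2}$, so that $LR'^2 = L(t+R^2)\le \varepsilon_0^2$ and hence $U_{LR'}\subset U_{\varepsilon_0}\subset U$. Then $(L-1)R'\ge R$, and on $U_R$ we have $\varrho_{(0,s)}(x,t) = (4\pi R^2)^{-n/2}e^{-\abs{x}^2/(4R^2)} \ge c(n)^{-1}R^{-n}$. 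Consequently, for any nonnegative convex $f$,
\begin{equation*}
R^{-n}\int_{U_R} f\,d\wt{V^\varepsilon_t}\le c(n)\int_{U_{(L-1)R'}} f\,\varrho_{(0,s)}(\cdot,t)\,d\wt{V^\varepsilon_t}.
\end{equation*}
We bound the right-hand side by Proposition~\ref{prop:monotonicity}, obtaining an initial-time term and an error term.

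\emph{Density estimate.} For \eqref{eq:time_uniform_density} we take $f\equiv 1$. The initial term is $\int \varrho_{(0,s)}(x,0)\,d\haus^n\lfloor_{\Gamma^\varepsilon_0}$. We split $U_{LR'}$ into dyadic annuli $U_{2^{k+1}\sqrt{s}}\setminus U_{2^k\sqrt{s}}$ and use Lemma~\ref{lem:hole_nucleation}\ref{itm:hole_nucleation_density}; this is valid because all radii up to $LR'\le\varepsilon_0$ are admissible. Since the Gaussian weight on the $k$-th annulus is at most $s^{-n/2}e^{-4^{k-2}}$, summing gives $\le c(n)\Theta_0$. In the error term we have $R\abs{\nabla f}+f=1$, and by Proposition~\ref{prop:existence} the mass satisfies $\wt{V^\varepsilon_\tau}(U_{LR'})\le\haus^n(\Gamma^\varepsilon_0)\le c\,\haus^n(\Gamma_0)$. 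Here we use that the modification happens in $U_{2\varepsilon}$, with mass controlled by Lemma~\ref{lem:hole_nucleation}\ref{itm:hole_nucleation_density}, which is bounded by a constant multiple of $\haus^n(\Gamma_0)$ after adjusting $c$. We need to turn $(LR')^{-n}$ into $\varepsilon_0^{-n}$. If $LR'$ is comparable to $\varepsilon_0$ this is immediate. In general we avoid the issue by running the argument at the largest admissible scale: we choose $R' = \varepsilon_0/L^{1/2}\cdot 2^{-1/2}$ rather than $\sqrt{t+R^2}$, and keep $s=t+R^2$. Proposition~\ref{prop:monotonicity} allows any $s\le LR'^2$, and $\varrho$ still has the lower bound on $U_R$. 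Then $(LR')^{-n}\le c\,\varepsilon_0^{-n}L^{-n/2}\le c\,\varepsilon_0^{-n}$, which gives \eqref{eq:time_uniform_density}. We finally replace $\omega_n$ by an adjusted constant.

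\emph{Excess estimate.} For \eqref{eq:time_uniform_excess} we take the convex function $f(x)=\abs{T^\perp x}^2$. The initial term is
\begin{equation*}
\int \abs{T^\perp x}^2\varrho_{(0,s)}(x,0)\,d\haus^n\lfloor_{\Gamma^\varepsilon_0}.
\end{equation*}
Using the same dyadic decomposition, the annulus at scale $r_k=2^k\sqrt{s}$ contributes at most $s^{-n/2}e^{-4^{k-2}}c\mu^2 r_k^{n+2}$ by Lemma~\ref{lem:hole_nucleation}\ref{itm:hole_nucleation_excess}. Summing gives $c\mu^2 s = c\mu^2(t+R^2)$. After multiplying by $R^{-2}$, this is the term $c\mu^2(1+t/R^2)$. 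For the error term, $\sup_{U_{LR'}}(R'\abs{\nabla f}+f)\le c(LR')^2\le c\varepsilon_0^2$. Combining this with the mass bound and the prefactor $R^{-n}\cdot R^{-2}$ from the left-hand side, and recalling that the lower bound for $\varrho$ produced the factor $R^{-n}$, we obtain $cR^{-2}\varepsilon_0^{-n+2}\haus^n(\Gamma_0)Le^{-L/32}$.

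The main point requiring care is the bookkeeping of scales in Proposition~\ref{prop:monotonicity}. The parameter $R$ in that proposition is not the $R$ of the present statement. We must pick it large, of order $\varepsilon_0/\sqrt{L}$, so that all radii stay inside the range where Lemma~\ref{lem:hole_nucleation} applies and the error term carries $\varepsilon_0$ instead of $R$. At the same time we must choose $s=t+R^2\le LR'^2$, so that the Gaussian at time $t$ bounds $R^{-n}\mathbf{1}_{U_R}$ from above by a constant multiple of itself. Checking the constraint $s\le LR'^2$, namely $t+R^2\le\varepsilon_0^2/2$, follows from the stated ranges of $t$ and $R$.
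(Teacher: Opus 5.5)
Your strategy matches the paper's: apply Proposition~\ref{prop:monotonicity} with $s=t+R^2$, bound $R^{-n}\mathbf{1}_{U_R}$ by the Gaussian at time $t$, control the initial term radially by Lemma~\ref{lem:hole_nucleation}\ref{itm:hole_nucleation_excess}--\ref{itm:hole_nucleation_density}, and control the error term by the energy inequality. However, the scale bookkeeping, which you yourself flag as the delicate point, is wrong.

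In Proposition~\ref{prop:monotonicity} the spatial radius is $LR'$ and the time window is $[0,LR'^2]$, and you conflate the two. From $LR'^2\le\varepsilon_0^2$ you only get $LR'\le\sqrt{L}\,\varepsilon_0$, not $U_{LR'}\subset U_{\varepsilon_0}$. Your final choice $R'=\varepsilon_0/\sqrt{2L}$ makes the problem explicit: $LR'=\varepsilon_0\sqrt{L/2}$, which exceeds $\varepsilon_0$ for every $L>2$ and tends to infinity with $L$. This breaks three steps:
\begin{enumerate}
\item For large $L$ the ball $U_{LR'}$ leaves $U$. There $\{V^\varepsilon_t\}$ is not a Brakke flow, since it has a fixed boundary on $\partial U$, so Proposition~\ref{prop:monotonicity} cannot be applied.
\item The bounds of Lemma~\ref{lem:hole_nucleation} hold only for radii $\le\varepsilon_0$, so your dyadic sum over annuli up to radius $LR'$ is unjustified.
\item In the excess case, $\sup_{U_{LR'}}(R'\abs{\nabla f}+f)$ is of order $L\varepsilon_0^2$, not $\varepsilon_0^2$.
\end{enumerate}
Your first choice $R'=\sqrt{t+R^2}$ has the same defect, and it also leaves a factor $(L\sqrt{t+R^2})^{-n}$ that blows up as $t,R\to0$.

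The fix is to take the base scale $R'=\varepsilon_0/L$, as the paper does. Then:
\begin{enumerate}
\item $LR'=\varepsilon_0$ exactly. So $U_{LR'}\subset U$, because $\varepsilon_0<\dist(0,\partial U)/2$, and the factor $(LR')^{-n}$ is literally $\varepsilon_0^{-n}$.
\item $(L-1)R'=(1-1/L)\varepsilon_0\ge\varepsilon_0/2\ge R$, so $U_R$ lies in the region where the left-hand side is evaluated.
\item The time window is $LR'^2=\varepsilon_0^2/L\ge t+R^2=s$, precisely because of the hypotheses $t\le\varepsilon_0^2/(2L)$ and $R^2\le\varepsilon_0^2/(2L)$. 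This is where the $L$-dependence of the admissible ranges comes from. The fact that your constraint $t+R^2\le\varepsilon_0^2/2$ was so much weaker than the stated hypotheses was a sign that the scales were off.
\item For $f=\abs{T^\perp x}^2$ one gets $\sup_{U_{\varepsilon_0}}(R'\abs{\nabla f}+f)\le 3\varepsilon_0^2$.
\item The initial term is an integral over $U_{\varepsilon_0}$. Split the radial decomposition at radius $\varepsilon_0$: below it use Lemma~\ref{lem:hole_nucleation}, above it use only the bound at $R=\varepsilon_0$.
\end{enumerate}
With these changes the rest of your argument goes through.

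One smaller point concerns the mass bound. Use $\Gamma^\varepsilon_0\subset\g_\varepsilon(\Gamma_0)$ with $\Lip\g_\varepsilon\le2$, which gives $\haus^n(\Gamma^\varepsilon_0)\le 2^n\haus^n(\Gamma_0)$. Your route through Lemma~\ref{lem:hole_nucleation}\ref{itm:hole_nucleation_density} brings $\Theta_0$ into the constant, whereas the statement requires $c=c(n)$.
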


The fact that the constant $c$ is independent of $\Gamma_0$ allows us to choose $\mu_0$ in Theorem~\ref{thm:main} to depend only on $n$ and $\Theta_0$.

\begin{proof}
    Let $f \in C^2(\R^{n+1})$ be either the constant function $1$ or the function $x \mapsto \abs{T^\perp x}^2$.
    For $t \in [0,\varepsilon_0^2/(2L)]$ and $R \in (0,\varepsilon_0/(2L)^{1/2}]$, we apply Proposition~\ref{prop:monotonicity} with $R = \varepsilon_0/L$ and $s = t+R^2$ to obtain
    \begin{align*}
        &\int_{U_{(1-1/L)\varepsilon_0}} f(x)\varrho_{(0,t+R^2)}(x,t) \,d\wt{V^\varepsilon_t}(x) \\
        &\leq \int_{U_{\varepsilon_0}} f(x)\varrho_{(0,t+R^2)}(x,0) \,d\wt{V^\varepsilon_0}(x) + c(n)Le^{-L/32}\biggl(\sup_{U_{\varepsilon_0}}(\varepsilon_0\abs{\nabla f} + f)\biggr)\sup_{\tau \in [0,t]}\frac{\wt{V^\varepsilon_\tau}(U)}{\varepsilon_0^n} \\
        &\leq \int_{U_{\varepsilon_0}} f(x)\varrho_{(0,t+R^2)}(x,0) \,d\wt{V^\varepsilon_0}(x) + c(n)Le^{-L/32}\varepsilon_0^{-n}\haus^n(\Gamma_0)\sup_{U_{\varepsilon_0}}(\varepsilon_0\abs{\nabla f} + f),
    \end{align*}
    where the last inequality follows from \eqref{eq:energy_ineq}.
    We can estimate the left-hand side from below as
    \begin{equation*}
        \int_{U_{(1-1/L)\varepsilon_0}} f(x)\varrho_{(0,t+R^2)}(x,t) \,d\wt{V^\varepsilon_t}(x) \geq \frac{e^{-1/4}}{(4\pi)^{n/2}}R^{-n}\int_{U_R} f(x) \,d\wt{V^\varepsilon_t}(t).
    \end{equation*}
    For the right-hand side, we first note that Fubini's theorem gives
    \begin{align*}
        &\int_{U_{\varepsilon_0}} f(x)\varrho_{(0,t+R^2)}(x,0) \,d\wt{V^\varepsilon_0}(x) \\
        &= \frac{1}{(4\pi(t+R^2))^{n/2}}\int_{U_{\varepsilon_0}} f(x)\int_{\abs{x}/\sqrt{t+R^2}}^\infty \frac{re^{-r^2/4}}{2} \,drd\wt{V^\varepsilon_0}(x) \\
        &= \frac{1}{(4\pi(t+R^2))^{n/2}}\int_0^\infty \frac{re^{-r^2/4}}{2}\int_{U_{r\sqrt{t+R^2}} \cap U_{\varepsilon_0}} f(x) \,d\wt{V^\varepsilon_0}(x)dr.
    \end{align*}
    In the case $f \equiv 1$, Lemma~\ref{lem:hole_nucleation}~\ref{itm:hole_nucleation_density} yields
    \begin{align*}
        &\int_{U_{\varepsilon_0}} \varrho_{(0,t+R^2)}(x,0) \,d\wt{V^\varepsilon_0}(x) = \frac{1}{(4\pi(t+R^2))^{n/2}}\int_0^\infty \frac{re^{-r^2/4}}{2}\wt{V^\varepsilon_0}(U_{r\sqrt{t+R^2}} \cap U_{\varepsilon_0}) \,dr \\
        &\leq c(n)\int_0^{\varepsilon_0/\sqrt{t+R^2}} \frac{r^{n+1}e^{-r^2/4}}{2}\frac{\wt{V^\varepsilon_0}(U_{r\sqrt{t+R^2}})}{(r\sqrt{t+R^2})^n} \,dr + c(n)\int_{\varepsilon_0/\sqrt{t+R^2}}^\infty \frac{r^{n+1}e^{-r^2/4}}{2}\frac{\wt{V^\varepsilon_0}(U_{\varepsilon_0})}{\varepsilon_0^n} \,dr \\
        &\leq c(n)\Theta_0,
    \end{align*}
    from which \eqref{eq:time_uniform_density} follows.
    In the case $f(x) = \abs{T^\perp x}^2$, Lemma~\ref{lem:hole_nucleation}~\ref{itm:hole_nucleation_excess} yields
    \begin{align*}
        &\int_{U_{\varepsilon_0}} \abs{T^\perp x}^2\varrho_{(0,t+R^2)}(x,0) \,d\wt{V^\varepsilon_0}(x) = \frac{1}{(4\pi(t+R^2))^{n/2}}\int_0^\infty \frac{re^{-r^2/4}}{2}\int_{U_{r\sqrt{t+R^2}} \cap U_{\varepsilon_0}} \abs{T^\perp x}^2 \,d\wt{V^\varepsilon_0}dr \\
        &\leq c(n)(t+R^2)\int_0^{\varepsilon_0/\sqrt{t+R^2}} \frac{r^{n+3}e^{-r^2/4}}{2}\bigl(r\sqrt{t+R^2}\bigr)^{-n-2}\int_{U_{r\sqrt{t+R^2}}} \abs{T^\perp x}^2 \,d\wt{V^\varepsilon_0}(x)dr \\
        &\quad + c(n)(t+R^2)\int_{\varepsilon_0/\sqrt{t+R^2}}^\infty \frac{r^{n+3}e^{-r^2/4}}{2}\varepsilon_0^{-n-2}\int_{U_{\varepsilon_0}} \abs{T^\perp x}^2 \,d\wt{V^\varepsilon_0}(x)dr \\
        &\leq c(n)\mu^2(t+R^2),
    \end{align*}
    from which \eqref{eq:time_uniform_excess} follows.
\end{proof}

\section{Proof of Theorem~\ref{thm:main}}
\label{sec:proof}

In this section, we prove the main theorem.
We fix $\Theta_0$, $\mu$, $U$, $\Gamma_0$, $\{E_{0,i}\}_{i=1}^N$, and $T$ so that Assumption~\ref{asm:tangent_cone} holds.
Let $\mu_1$ and $\varepsilon_0$ be the constants given in Lemma~\ref{lem:hole_nucleation}, and assume that $\mu \in (0,\mu_1)$.
For each $\varepsilon \in (0,\varepsilon_0]$, let $\{V^\varepsilon_t\}_{t \geq 0}$ be the Brakke flow with fixed boundary $\partial\Gamma_0$ and initial datum $\Gamma^\varepsilon_0$ as in Proposition~\ref{prop:existence}.
Fix $\zeta \in (1/2,1)$ and $\alpha \in (1,2)$, and define
\begin{equation*}
    T_\varepsilon = \sup\{t \in [0,\infty) : \wt{V^\varepsilon_\tau}(\Phi_\varepsilon(\cdot,\tau)) \leq \alpha\c \text{ for all $\tau \in [0,t]$}\} \in [0,\infty],
\end{equation*}
where $\Phi_\varepsilon$ is defined as in Section~\ref{sec:expanding_holes} and $\c = \int_T \Phi_1(x,0) \,d\haus^n(x)$.
We begin with the following preliminary lower bound, which will be used to prove that $\limsup_{\varepsilon \to 0} T_\varepsilon > 0$.

\begin{lemma}
    \label{lem:rough_bound}
    There exist $\delta_0 = \delta_0(n,\Theta_0,\zeta,\alpha) \in (0,1)$ and $\varepsilon_1 = \varepsilon_1(\mu,\Gamma_0,T) \in (0,1)$ such that $T_\varepsilon \geq \delta_0\varepsilon^2$ and $\wt{V^\varepsilon_{T_\varepsilon}}(\Phi_\varepsilon(\cdot,T_\varepsilon)) = \alpha\c$ for all $\varepsilon \in (0,\varepsilon_1]$.
\end{lemma}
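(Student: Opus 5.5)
The plan has three steps: bound the initial value of $\wt{V^\varepsilon_t}(\Phi_\varepsilon(\cdot,t))$ by $\c$, control its growth for $t\le\delta_0\varepsilon^2$ by a crude estimate, and obtain the equality at $T_\varepsilon$ from the almost-monotonicity of $t\mapsto\wt{V^\varepsilon_t}(\Phi_\varepsilon(\cdot,t))$. The finiteness of $T_\varepsilon$ needed for that equality is a step I have not resolved.

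\emph{Step 1: initial value.} At $t=0$ we have $\Phi_\varepsilon(x,0)=\varepsilon^{-n}\chi^2(\abs{x}/\varepsilon)$, which is supported in $U_\varepsilon$. Since $U_\varepsilon\subset\{\abs{Tx}<\varepsilon\}\cap U_{2\varepsilon}$, Lemma~\ref{lem:hole_nucleation}~\ref{itm:hole_nucleation_5} gives $\Gamma^\varepsilon_0\cap U_\varepsilon\subset T$. Hence
\begin{equation*}
\wt{V^\varepsilon_0}(\Phi_\varepsilon(\cdot,0))\le\int_T\Phi_\varepsilon(x,0)\,d\haus^n(x)=\c,
\end{equation*}
where the last equality is scale invariance. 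In fact the value is smaller, because of the hole, but $\le\c<\alpha\c$ is all we need.

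\emph{Step 2: short-time control.} I will test Brakke's inequality \eqref{eq:Brakke_ineq} on $[0,t]$ with $\Phi_\varepsilon$, without using the $L^2$ structure of Lemma~\ref{lem:expanding_holes}. Absorbing $\nabla\Phi\cdot h\le\Phi\abs{h}^2+\abs{\nabla\Phi}^2/(4\Phi)$ gives
\begin{equation*}
\wt{V^\varepsilon_t}(\Phi_\varepsilon(\cdot,t))\le\c+\int_0^t\int\Bigl(\frac{\abs{\nabla\Phi_\varepsilon}^2}{4\Phi_\varepsilon}+\abs{\partial_t\Phi_\varepsilon}\Bigr)d\wt{V^\varepsilon_\tau}\,d\tau.
\end{equation*}
From the explicit formulas in the proof of Lemma~\ref{lem:expanding_holes} and \eqref{eq:expanding_holes_cutoff}, the integrand is bounded by $c(n,\zeta)(\tau+\varepsilon^2)^{-n/2-1}$ on $U_{\sqrt{\tau+\varepsilon^2}}$ and vanishes outside. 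I then use \eqref{eq:time_uniform_density} from Proposition~\ref{prop:time_uniform}. First fix $L=L(\Gamma_0,\varepsilon_0)$ large enough that $c\varepsilon_0^{-n}\haus^n(\Gamma_0)Le^{-L/32}\le\Theta_0$. Then choose $\varepsilon_1$ so small that $2\varepsilon_1^2\le\varepsilon_0^2/(2L)$, which makes $t\le\varepsilon^2$ and $R=\sqrt{\tau+\varepsilon^2}$ admissible. This gives $\wt{V^\varepsilon_\tau}(U_{\sqrt{\tau+\varepsilon^2}})\le c\Theta_0(\tau+\varepsilon^2)^{n/2}$, and therefore
\begin{equation*}
\wt{V^\varepsilon_t}(\Phi_\varepsilon(\cdot,t))\le\c+c(n,\zeta)\Theta_0\int_0^t\frac{d\tau}{\tau+\varepsilon^2}\le\c+c(n,\zeta)\Theta_0\,t/\varepsilon^2.
\end{equation*}
Choosing $\delta_0$ with $c(n,\zeta)\Theta_0\delta_0<(\alpha-1)\c$ yields $T_\varepsilon\ge\delta_0\varepsilon^2$. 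Note that $\varepsilon_1$ depends on $\Gamma_0$ only through $\varepsilon_0$ and $L$, as the statement allows.

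\emph{Step 3: equality at $T_\varepsilon$.} The same computation shows that, for some constant $C=C(\varepsilon,n,\zeta,\Theta_0)$, the function $g(t)=\wt{V^\varepsilon_t}(\Phi_\varepsilon(\cdot,t))-Ct$ is non-increasing on bounded time intervals. Suppose $T_\varepsilon<\infty$. Monotonicity gives $g(T_\varepsilon)\le\lim_{t\nearrow T_\varepsilon}g(t)$, hence $\wt{V^\varepsilon_{T_\varepsilon}}(\Phi_\varepsilon(\cdot,T_\varepsilon))\le\alpha\c$. If the inequality were strict, then $g(t)\le g(T_\varepsilon)$ for $t>T_\varepsilon$ would keep the quantity below $\alpha\c$ on a right neighbourhood of $T_\varepsilon$, contradicting the definition of $T_\varepsilon$ as a supremum. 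So equality holds.

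\emph{Main obstacle: finiteness of $T_\varepsilon$.} I expect this to be the hardest point. For large $t$ the kernel $\Phi_\varepsilon$ spreads out and its $V^\varepsilon_t$-integral decays, so finiteness is not automatic. My first attempt would be to examine the precise convention in the paper, namely how $\Phi_\varepsilon$ and the flow are localized to the ball $U_{\sqrt{\tau+\varepsilon^2}}$ in which Lemma~\ref{lem:expanding_holes} is applied. Alternatively, I would read the equality statement as conditional on $T_\varepsilon<\infty$ and check whether that suffices for the later argument. The quantitative content, Steps 1 and 2, is in any case routine.
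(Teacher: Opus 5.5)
Your proposal is correct and is essentially the paper's proof. The initial bound via Lemma~\ref{lem:hole_nucleation}~\ref{itm:hole_nucleation_5}, the logarithmic growth estimate fed by \eqref{eq:time_uniform_density}, and the one-sided limit argument at $T_\varepsilon$ all match the paper. The only difference is that you test Brakke's inequality directly and absorb $\nabla\Phi_\varepsilon\cdot h$, whereas the paper applies Lemma~\ref{lem:expanding_holes} and bounds $\abs{T^\perp x}^2\le t+\varepsilon^2$ on $U_{\sqrt{t+\varepsilon^2}}$; both give the same integrand $c\,\wt{V^\varepsilon_t}(U_{\sqrt{t+\varepsilon^2}})(t+\varepsilon^2)^{-n/2-1}$.

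The finiteness of $T_\varepsilon$, which you could not establish, is neither provable in general nor needed. As you observe, $\wt{V^\varepsilon_t}(\Phi_\varepsilon(\cdot,t))$ decays like $(t+\varepsilon^2)^{-n/2}$, so nothing forces a crossing of $\alpha\c$, and no localization convention changes this. The paper's proof tacitly assumes $T_\varepsilon<\infty$. The equality is used only in Lemma~\ref{lem:sharp_bound}, under the contradiction hypothesis $T_\varepsilon\to0$. So your conditional reading is the right one: equality holds whenever $T_\varepsilon$ is finite and small enough that $\Phi_\varepsilon(\cdot,t)$ stays compactly supported in $U$ near $t=T_\varepsilon$.
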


\begin{proof}
    By Lemma~\ref{lem:expanding_holes}, for all $0 \leq t_1 < t_2$, we have
    \begin{equation}
        \label{eq:rough_bound_1}
        \begin{split}
            \wt{V^\varepsilon_{t_2}}(\Phi_\varepsilon(\cdot,t_2)) - \wt{V^\varepsilon_{t_1}}(\Phi_\varepsilon(\cdot,t_1)) &\leq c(n,\zeta)\int_{t_1}^{t_2}\int_{U_{\sqrt{t+\varepsilon^2}}} \frac{\abs{T^\perp x}^2}{(t + \varepsilon^2)^{n/2+2}} \,d\wt{V^\varepsilon_t}dt \\
            &\leq c(n,\zeta)\int_{t_1}^{t_2} \frac{\wt{V^\varepsilon_t}(U_{\sqrt{t+\varepsilon^2}})}{(t+\varepsilon^2)^{n/2+1}} \,dt.
        \end{split}
    \end{equation}
    It follows from \eqref{eq:rough_bound_1} with $t_1 \nearrow T_\varepsilon$ and $t_2 = T_\varepsilon$ that $\wt{V^\varepsilon_{T_\varepsilon}}(\Phi_\varepsilon(\cdot,T_\varepsilon)) \leq \alpha\c$.
    If $\wt{V^\varepsilon_{T_\varepsilon}}(\Phi_\varepsilon(\cdot,T_\varepsilon)) < \alpha\c$, \eqref{eq:rough_bound_1} with $t_1 = T_\varepsilon$ and $t_2 = T_\varepsilon + \delta$ implies
    \begin{equation*}
        \wt{V^\varepsilon_{t_2}}(\Phi_\varepsilon(\cdot,t_2)) \leq \wt{V^\varepsilon_{T_\varepsilon}}(\Phi_\varepsilon(\cdot,T_\varepsilon)) + c(n,\zeta)\int_{T_\varepsilon}^{T_\varepsilon + \delta} \frac{\wt{V^\varepsilon_t}(U_{\sqrt{t+\varepsilon^2}})}{(t+\varepsilon^2)^{n/2+1}} \,dt < \alpha\c
    \end{equation*}
    for sufficiently small $\delta > 0$.
    This contradicts the definition of $T_\varepsilon$.
    We thus conclude that $\wt{V^\varepsilon_{T_\varepsilon}}(\Phi_\varepsilon(\cdot,T_\varepsilon)) = \alpha\c$.

    By Proposition~\ref{prop:time_uniform}, there exists $L_0 = L_0(n,\varepsilon_0,\Gamma_0) \geq 2$ such that for all $t \in [0,\varepsilon^2_0/(2L_0)]$ with $\sqrt{t+\varepsilon^2} \leq \varepsilon_0/(2L_0)^{1/2}$, we have
    \begin{equation}
        \label{eq:rough_bound_2}
        \frac{\wt{V^\varepsilon_t}(U_{\sqrt{t+\varepsilon^2}})}{(t+\varepsilon^2)^{n/2}} \leq c(n)\Theta_0 + 1.
    \end{equation}
    Set $\varepsilon_1 = \varepsilon_0/(2L_0^{1/2})$.
    Then it follows from \eqref{eq:rough_bound_1} and \eqref{eq:rough_bound_2} that
    \begin{equation}
        \label{eq:rough_bound_3}
        \wt{V^\varepsilon_{t_2}}(\Phi_\varepsilon(\cdot,t_2)) - \wt{V^\varepsilon_{t_1}}(\Phi_\varepsilon(\cdot,t_1)) \leq c(n,\Theta_0,\zeta)\log\biggl(\frac{t_2+\varepsilon^2}{t_1+\varepsilon^2}\biggr)
    \end{equation}
    for all $\varepsilon \in (0,\varepsilon_1]$ and $0 \leq t_1 < t_2 \leq \varepsilon_1^2$.
    Let $\delta_0 \in (0,1)$ be a constant.
    By Lemma~\ref{lem:hole_nucleation}~\ref{itm:hole_nucleation_5} and $\spt\Phi_\varepsilon(\cdot,0) \subset U_\varepsilon$, we have $\wt{V^\varepsilon_0}(\Phi_\varepsilon(\cdot,0)) \leq \c$.
    Hence, for any $t \in [0,\delta_0\varepsilon^2]$, \eqref{eq:rough_bound_3} with $t_1 = 0$ and $t_2 = t$ yields $\wt{V^\varepsilon_t}(\Phi_\varepsilon(\cdot,t)) \leq \c + c\log(1+\delta_0)$.
    Taking $\delta_0$ sufficiently small so that $c\log(1+\delta_0) \leq (\alpha-1)\c$, we obtain $T_\varepsilon \geq \delta_0\varepsilon^2$ for all $\varepsilon \in (0,\varepsilon_1)$.
\end{proof}

The following lemma plays a crucial role in proving that $\limsup_{\varepsilon \to 0} T_\varepsilon > 0$.

\begin{lemma}
    \label{lem:gap}
    For any $E, \Gamma \in (0,\infty)$, $\Lambda \in [1/2,\infty)$, and $\nu \in (0,1)$, there exists a constant $\mu_2 = \mu_2(n,\zeta,\alpha,E,\Gamma,\Lambda,\nu) \in (0,1)$ with the following property.
    Let $\lambda \in [1/2,\Lambda]$, and suppose that $V \in \IV_n(U_\Lambda)$ satisfies
    \begin{gather}
        \label{eq:gap_mass}
        \wt{V}(U_\Lambda) \leq E, \\
        \label{eq:gap_curvature}
        \int_{U_\Lambda} \abs{h(V,x)}^2 \,d\wt{V}(x) \leq \Gamma, \\
        \label{eq:gap_excess}
        \int_{U_\Lambda} \abs{T^\perp x}^2 \,d\wt{V_t}(x) \leq \mu_2^2, \\
        \label{eq:gap_assumption}
        \wt{V}(\Phi_\lambda(\cdot,0)) \leq \alpha\c.
    \end{gather}
    Then we have
    \begin{equation*}
        \wt{V}(\Phi_\lambda(\cdot,0)) \leq (1+\nu)\c.
    \end{equation*}
\end{lemma}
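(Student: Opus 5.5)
We argue by contradiction combined with compactness of integral varifolds. Suppose the statement fails for some fixed $E,\Gamma,\Lambda,\nu$. Then there are sequences $\mu_2^{(j)} \to 0$, radii $\lambda_j \in [1/2,\Lambda]$ and varifolds $V_j \in \IV_n(U_\Lambda)$ satisfying \eqref{eq:gap_mass}--\eqref{eq:gap_assumption} with $\mu_2 = \mu_2^{(j)}$, but with $\wt{V_j}(\Phi_{\lambda_j}(\cdot,0)) > (1+\nu)\c$. After passing to a subsequence we have $\lambda_j \to \lambda_\infty \in [1/2,\Lambda]$. The mass bound \eqref{eq:gap_mass} and the $L^2$ mean curvature bound \eqref{eq:gap_curvature} give uniform control of $\wt{\delta V_j}$ on $U_\Lambda$. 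Allard's integral compactness theorem then yields a subsequence with $V_j \to V_\infty \in \IV_n(U_\Lambda)$ as varifolds. The limit has $h(V_\infty,\cdot) \in L^2$ with $\int \abs{h}^2 \le \Gamma$, by lower semicontinuity. Moreover, \eqref{eq:gap_excess} passes to the limit and gives $\int_{U_\Lambda}\abs{T^\perp x}^2\,d\wt{V_\infty} = 0$, so $\spt\wt{V_\infty} \cap U_\Lambda \subset T$.

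The next step identifies $V_\infty$. Since $V_\infty$ is integral with support in the plane $T$, the constancy theorem applies, in its version for varifolds with first variation in $L^2$: a varifold supported in $T$ with tangent plane $T$ a.e. has $h \perp T$ by \eqref{eq:perpendicularity}, and its first variation in tangential directions vanishes. We therefore get $\wt{V_\infty} = m\,\haus^n\lfloor_{T \cap U_\Lambda}$ for a constant integer $m \ge 0$. The integrality here is used through the a.e. tangent plane being $T$ together with the integer-valued density; this is the standard constancy theorem (Allard~4.6(3) / Simon~41.1) applied to $V_\infty\lfloor$ tangential variations. Note that $T \cap U_\Lambda$ is connected, so $m$ is genuinely constant.

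Finally, we pass the test function to the limit. The function $\Phi_{\lambda_j}(\cdot,0) = \lambda_j^{-n}\chi^2(\abs{x}/\lambda_j)$ converges uniformly to $\Phi_{\lambda_\infty}(\cdot,0)$, with supports uniformly inside $\clos U_{\lambda_j} \subset$ the relevant ball. If $\lambda_\infty=\Lambda$, there is a boundary issue; we handle it by noting that $\chi$ vanishes near $1$, so the supports lie in $U_{\theta\Lambda}$ for some $\theta<1$, which suffices. Hence $\wt{V_j}(\Phi_{\lambda_j}(\cdot,0)) \to m\int_T \Phi_{\lambda_\infty}(x,0)\,d\haus^n = m\c$, using the scale invariance $\int_T\Phi_\lambda(\cdot,0)\,d\haus^n = \c$. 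From \eqref{eq:gap_assumption} we get $m\c \le \alpha\c < 2\c$, so $m \le 1$. From the reverse inequality we get $m\c \ge (1+\nu)\c > \c$, so $m \ge 2$. This is a contradiction.

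The main obstacle is the second step: justifying that the limit is a constant-multiplicity plane, rather than something like a multiplicity that jumps across an $(n-1)$-dimensional set. This is exactly where the curvature bound \eqref{eq:gap_curvature} is needed. Without it the limit need not have bounded first variation. With it, the constancy theorem applies because tangential first variation vanishes, and the $L^2$ bound ensures no singular part of $\wt{\delta V_\infty}$ survives. I would verify this step carefully via Allard's compactness (first variation total mass bounded by $\sqrt{E\Gamma}$) and the perpendicularity \eqref{eq:perpendicularity}.
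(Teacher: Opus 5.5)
Your proposal is correct and follows essentially the same route as the paper's proof. The paper also argues by contradiction with $\mu_2 = 1/j$, applies Allard compactness, and identifies the limit as a plane $T$ with constant integer multiplicity (tangential first variation plus perpendicularity); the bound $\alpha < 2$ then forces multiplicity at most one, contradicting the strict lower bound $(1+\nu)\c$. Your explicit treatment of $\lambda_j \to \lambda_\infty$ (uniform convergence of test functions supported in a fixed compact subset of $U_\Lambda$) and of the $L^2$ bound on $h(V_\infty,\cdot)$ passing to the limit fills in details the paper leaves implicit.
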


\begin{proof}
    Suppose that the conclusion does not hold.
    Then, for each $j$, there exist $\lambda_j \in [1/2,\Lambda]$ and $V_j \in \IV_n(U_\Lambda)$ satisfying \eqref{eq:gap_mass}--\eqref{eq:gap_assumption} with $\lambda$, $V$, and $\mu_2$ replaced by $\lambda_j$, $V_j$, and $1/j$, respectively.
    Passing to a subsequence, we may assume that $\lambda_j$ converges to some $\lambda \in [1/2,\Lambda]$.
    Moreover, by Allard's compactness theorem \cite[6.4]{All72}, after passing to a further subsequence, there exists $V \in \IV_n(U_\Lambda)$ such that $V_j \to V$ as varifolds.
    By \eqref{eq:gap_excess}, we have $\spt\wt{V} \subset T$.
    Hence there exists a non-negative integer-valued function $\theta$ on $T$ satisfying $V = \var(T,\theta)$.
    For any vector field $\phi \in C^1_c(T \cap U_\Lambda;T)$ on $T$, testing \eqref{eq:mean_curvature} with $g(x) = \phi(x)$ implies that
    \begin{equation*}
        \int_{T \cap U_\Lambda} \theta(x)\operatorname{div}\phi(x) \,d\haus^n(x) = -\int_{T \cap U_\Lambda} \phi(x) \cdot h(V,x) \,d\wt{V}(x) = 0,
    \end{equation*}
    where we used \eqref{eq:perpendicularity}.
    This shows that $\theta$ is constant.
    Since \eqref{eq:gap_assumption} gives $\wt{V}(\Phi_\lambda(\cdot,0)) \leq \alpha\c$, we have $\theta \in \{0, 1\}$.
    Therefore, we obtain $\lim_{j \to \infty} \wt{V_j}(\Phi_\lambda(\cdot,0)) \leq \wt{V}(\Phi_\lambda(\cdot,0)) = \theta\c \leq \c$.
    This contradicts the choice of $V_j$.
\end{proof}

Using Lemmas~\ref{lem:rough_bound} and \ref{lem:gap}, we obtain the following lemma via a blow-up type argument.

\begin{lemma}
    \label{lem:sharp_bound}
    There exists a constant $\mu_3 = \mu_3(n,\Theta_0,\zeta,\alpha) \in (0,1)$ such that if $\mu \in (0,\mu_3)$, then $\limsup_{\varepsilon \to 0} T_\varepsilon > 0$.
\end{lemma}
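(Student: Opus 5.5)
We argue by contradiction, assuming $\lim_{\varepsilon\to0}T_\varepsilon=0$. By Lemma~\ref{lem:rough_bound}, for all small $\varepsilon$ we have $\delta_0\varepsilon^2\le T_\varepsilon<\infty$ and $\wt{V^\varepsilon_{T_\varepsilon}}(\Phi_\varepsilon(\cdot,T_\varepsilon))=\alpha\c$. Set $r_\varepsilon=\sqrt{T_\varepsilon+\varepsilon^2}\to0$ and rescale parabolically:
\begin{equation*}
W^\varepsilon_s=(\eta_{0,r_\varepsilon})_\sharp V^\varepsilon_{r_\varepsilon^2 s}.
\end{equation*}
Then $\lambda_\varepsilon:=\varepsilon/r_\varepsilon\in(0,(1+\delta_0)^{-1/2}]$, and $\Phi_\varepsilon(x,t)=r_\varepsilon^{-n}\Phi_{\lambda_\varepsilon}(x/r_\varepsilon,t/r_\varepsilon^2)$. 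Writing $s_\varepsilon=T_\varepsilon/r_\varepsilon^2=1-\lambda_\varepsilon^2$, we get
\begin{equation*}
\wt{W^\varepsilon_{s}}(\Phi_{\lambda_\varepsilon}(\cdot,s))\le\alpha\c \quad (s\le s_\varepsilon), \qquad \wt{W^\varepsilon_{s_\varepsilon}}(\Phi_{\lambda_\varepsilon}(\cdot,s_\varepsilon))=\alpha\c.
\end{equation*}
Since $\Phi_\lambda(\cdot,s)=\Phi_{\sqrt{s+\lambda^2}}(\cdot,0)$, at time $s_\varepsilon$ this reads $\wt{W^\varepsilon_{s_\varepsilon}}(\Phi_1(\cdot,0))=\alpha\c$. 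The goal is to contradict this using Lemma~\ref{lem:gap} with $\lambda=1$, which would force the value to be at most $(1+\nu)\c<\alpha\c$ once $\nu<\alpha-1$.

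To apply Lemma~\ref{lem:gap} we need the three hypotheses at a good time. Fix $\Lambda=2$ (say) and $L$ large. Proposition~\ref{prop:time_uniform} is scale invariant in the right way: for small $\varepsilon$, $r_\varepsilon\Lambda$ and $r_\varepsilon^2$ lie in the admissible ranges. It then gives, uniformly in $s\in[0,1]$, the mass bound $\wt{W^\varepsilon_s}(U_\Lambda)\le c(n)\Theta_0\Lambda^n+o(1)=:E$. It also gives the excess bound
\begin{equation*}
\int_{U_\Lambda}\abs{T^\perp x}^2\,d\wt{W^\varepsilon_s}\le c\mu^2\Lambda^{n+2}(1+\Lambda^{-2})+o(1),
\end{equation*}
where the error terms $R^{-2}\varepsilon_0^{-n+2}\haus^n(\Gamma_0)Le^{-L/32}$ must be handled. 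I would choose $L=L(\varepsilon)\to\infty$ slowly so that these errors vanish after rescaling, while $r_\varepsilon\le\varepsilon_0/(2L)^{1/2}$ still holds. The curvature bound (\ref{eq:gap_curvature}) only holds for a.e. time and in integrated form. I would therefore localize Brakke's inequality (or the monotonicity computation with a cutoff) to get $\int_{1-\sigma}^{1}\int_{U_\Lambda}\abs{h}^2\,d\wt{W^\varepsilon_s}ds\le C(n,\Theta_0,\sigma)$. Then, for a set of times $s\in[s_\varepsilon-\sigma,s_\varepsilon]$ of large measure, the curvature is at most $\Gamma:=2C/\sigma$.

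The time mismatch is handled via \eqref{eq:rough_bound_1}. In rescaled form, for $s<s_\varepsilon$,
\begin{equation*}
\alpha\c=\wt{W}(\Phi_{\lambda_\varepsilon}(\cdot,s_\varepsilon))\le\wt{W}(\Phi_{\lambda_\varepsilon}(\cdot,s))+c\int_s^{s_\varepsilon}\frac{\text{excess}}{(\tau+\lambda_\varepsilon^2)^{n/2+2}}\,d\tau\le\wt{W_s}(\Phi_{\sqrt{s+\lambda_\varepsilon^2}}(\cdot,0))+C\mu^2\sigma+o(1),
\end{equation*}
using that $\tau+\lambda_\varepsilon^2\ge1-\sigma$. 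Pick such a good time $s$ with $\lambda=\sqrt{s+\lambda_\varepsilon^2}\in[1/2,1]$. Lemma~\ref{lem:gap} with $\nu=(\alpha-1)/2$ applies provided $c\mu^2(\ldots)\le\mu_2^2$; this is where $\mu_3$ is chosen, depending only on $n,\Theta_0,\zeta,\alpha$ since $E$, $\Gamma$ and $\Lambda$ do. It yields $\wt{W_s}(\Phi_\lambda(\cdot,0))\le(1+\nu)\c$, so $\alpha\c\le(1+\nu)\c+C\mu^2\sigma+o(1)$. This is a contradiction for small $\mu$ and small $\varepsilon$. (One could alternatively pass to a limit Brakke flow, but the direct version above avoids compactness of flows.)

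The main obstacle is the bookkeeping of Proposition~\ref{prop:time_uniform} under rescaling. Its constants involve $\varepsilon_0$ and $\haus^n(\Gamma_0)$, and the tail terms must become negligible at scale $r_\varepsilon$ without making $\mu_3$ depend on $\Gamma_0$. Choosing $L=L(\varepsilon)\to\infty$ appropriately, using $r_\varepsilon\to0$ (which is exactly the contradiction hypothesis), is the step I would check most carefully. A second delicate point is obtaining the local $L^2$ curvature bound in the rescaled window, which I expect to follow from testing Brakke's inequality with a cutoff together with the uniform mass bound.
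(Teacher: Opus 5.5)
Your argument is correct. It uses the same ingredients as the paper:
\begin{itemize}
\item contradiction under $T_\varepsilon\to0$ and a parabolic blow-up at the critical time;
\item the uniform bounds of Proposition~\ref{prop:time_uniform};
\item a localized $L^2$ curvature bound of the type \cite[(3.1)]{Ton19};
\item Lemma~\ref{lem:gap} at a good time shortly before the critical time;
\item Lemma~\ref{lem:expanding_holes} to propagate up to the critical time.
\end{itemize}
The genuine difference is that you run the argument directly on the rescaled approximating flows $W^\varepsilon$ for small $\varepsilon$, instead of first extracting a limit Brakke flow $\tilde V_t$.

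What this buys you is that the compactness theorem for Brakke flows is never needed. In the paper that theorem, with convergence for every $t$, is what carries the values at $t_0$ and the identity $\wt{V^{\varepsilon_j}_{T_{\varepsilon_j}}}(\Phi_{\varepsilon_j}(\cdot,T_{\varepsilon_j}))=\alpha\c$ at $t=1$ over to the limit. In your version the only compactness used is Allard's, hidden inside Lemma~\ref{lem:gap}. The price is twofold: you must carry $o(1)$ error terms, and you must prove the curvature bound uniformly in $\varepsilon$. The latter is the same cutoff estimate from Brakke's inequality plus the uniform mass bound, just applied before taking a limit. Two details to tidy up:
\begin{itemize}
\item Take the curvature estimate on the window $[s_\varepsilon-\sigma,s_\varepsilon]$ you actually use, not on $[1-\sigma,1]$.
\item Require $\sigma\le\delta_0/(1+\delta_0)$, so that this window lies in $[0,s_\varepsilon]$. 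This keeps all constants, and hence $\mu_3$, dependent only on $n,\Theta_0,\zeta,\alpha$.
\end{itemize}
Your normalization by $\sqrt{T_\varepsilon+\varepsilon^2}$ rather than $\sqrt{T_\varepsilon}$ is only cosmetic.

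The step you flagged as delicate is indeed the crucial one, and your treatment of it is the right one. By \eqref{eq:time_uniform_excess}, the tail at original radius $\rho$ is of order $\rho^{-2}\varepsilon_0^{2-n}\haus^n(\Gamma_0)Le^{-L/32}$. So after rescaling by $T_\varepsilon^{1/2}$ it carries an extra factor $T_\varepsilon^{-1}$, which the display \eqref{eq:sharp_bound_excess} omits. Consequently, in the paper's limit argument one also cannot let $j\to\infty$ at fixed $L$. One must take $L=L_j\to\infty$ along the sequence, for instance $L_j=64\log(1/T_{\varepsilon_j})$, which gives $T_{\varepsilon_j}^{-1}L_je^{-L_j/32}\to0$ and $T_{\varepsilon_j}L_j\to0$. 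This is exactly the coupling you propose. In your normalization, $L=96\log(1/r_\varepsilon)$ gives $r_\varepsilon^{-2}Le^{-L/32}\to0$ and $r_\varepsilon^2L\to0$. By contrast, the density tail in \eqref{eq:time_uniform_density} is scale-invariant, so a fixed $L$ suffices there.
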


\begin{proof}
    Suppose, for contradiction, that $\lim_{\varepsilon \to 0} T_\varepsilon = 0$.
    Define the Brakke flow $\{\tilde{V}^\varepsilon_t\}_{t \geq 0}$ in $U_{T_\varepsilon^{-1/2}\varepsilon_0/4}$ by $\tilde{V}^\varepsilon_t = (\eta_{0,T_\varepsilon^{-1/2}})_\sharp V^\varepsilon_{T_\varepsilon t}$.
    Then, for any $L \in [2,\infty)$, Proposition~\ref{prop:time_uniform} gives
    \begin{gather}
        \label{eq:sharp_bound_density}
        \frac{\wt{\tilde{V}^\varepsilon_t}(U_R)}{R^n}= \frac{\wt{V^\varepsilon_{T_\varepsilon t}}(U_{T_\varepsilon^{1/2}R})}{(T_\varepsilon^{1/2}R)^n} \leq c(n)\Theta_0 + c(n)\varepsilon_0^{-n}\haus^n(\Gamma_0)Le^{-L/32}, \\
        \label{eq:sharp_bound_excess}
        \begin{split}
            R^{-n-2}\int_{U_R} \abs{T^\perp x}^2 \,d\wt{\tilde{V}^\varepsilon_t}(x) &= (T_\varepsilon^{1/2}R)^{-n-2}\int_{U_{T_\varepsilon^{1/2}R}} \abs{T^\perp x}^2 \,d\wt{V^\varepsilon_{T_\varepsilon t}}(x) \\
            &\leq c(n)\mu^2\biggl(1 + \frac{t}{R^2}\biggr) + c(n)R^{-2}\varepsilon_0^{-n+2}\haus^n(\Gamma_0)Le^{-L/32}
        \end{split}
    \end{gather}
    for all $\varepsilon \in (0,\varepsilon_0]$, $t \in [0,T_\varepsilon^{-1}\varepsilon_0^2/(2L)]$, and $R \in (0,T_\varepsilon^{-1/2}\varepsilon_0/(2L)^{1/2}]$.
    Hence, by the compactness theorem for Brakke flows, there exist a sequence $\{\varepsilon_j\}_{j=1}^\infty \subset (0,\varepsilon_0]$ converging to $0$ and a Brakke flow $\{\tilde{V}_t\}_{t \geq 0}$ in $\R^{n+1}$ such that $\lim_{j \to \infty} \wt{\tilde{V}^{\varepsilon_j}_t} = \wt{\tilde{V}_t}$ as Radon measures on $\R^{n+1}$ for all $t \geq 0$.
    Furthermore, passing to a further subsequence if necessary, we may assume that $T_{\varepsilon_j}^{-1/2}\varepsilon_j$ converges to some $\lambda \in [0,\delta_0^{-1/2}]$, where $\delta_0$ is the constant given in Lemma~\ref{lem:rough_bound}.
    Letting first $j \to \infty$ and then $L \to \infty$ in \eqref{eq:sharp_bound_density} and \eqref{eq:sharp_bound_excess} with $\varepsilon = \varepsilon_j$, we obtain
    \begin{gather}
        \label{eq:sharp_bound_limit_density}
        \frac{\wt{\tilde{V}_t}(U_R)}{R^n} \leq c(n)\Theta_0, \\
        \label{eq:sharp_bound_limit_excess}
        R^{-n-2}\int_{U_R} \abs{T^\perp x}^2 \,d\wt{\tilde{V}_t}(x) \leq c(n)\mu^2\biggl(1 + \frac{t}{R^2}\biggr)
    \end{gather}
    for all $t \geq 0$ and $R \in (0,\infty)$.
    It follows from \eqref{eq:sharp_bound_limit_density} and \cite[(3.1)]{Ton19} that
    \begin{equation*}
        \int_0^1\int_{U_{(1+\delta_0^{-1})^{1/2}}} \abs{h(\tilde{V}_t,x)}^2 \,d\wt{\tilde{V}_t}(x)dt \leq c(n,\Theta_0,\delta_0).
    \end{equation*}
    In particular, there exists $t_0 \in [1/2,1]$ such that $\tilde{V}_{t_0} \in \IV_n(\R^{n+1})$ and
    \begin{equation*}
        \int_{U_{(1+\delta_0^{-1})^{1/2}}} \abs{h(\tilde{V}_{t_0},x)}^2 \,d\wt{\tilde{V}_{t_0}}(x) \leq c(n,\Theta_0,\delta_0).
    \end{equation*}
    By the definition of $T_\varepsilon$, we have
    \begin{equation*}
        \wt{\tilde{V}_{t_0}}(\Phi_\lambda(\cdot,t_0)) = \lim_{j \to \infty}\wt{\tilde{V}^{\varepsilon_j}_{t_0}}(\Phi_{T_{\varepsilon_j}^{-1/2}\varepsilon_j}(\cdot,t_0)) = \lim_{j \to \infty}\wt{V^{\varepsilon_j}_{T_{\varepsilon_j}t_0}}(\Phi_{\varepsilon_j}(\cdot,T_{\varepsilon_j}t_0)) \leq \alpha\c.
    \end{equation*}
    We choose $\mu_3$ sufficiently small so that Lemma~\ref{lem:gap} applies with $E = c(n)\Theta_0(1+\delta_0^{-1})^{n/2}$, $\Gamma = c(n,\Theta_0,\delta_0)$, $\Lambda = (1+\delta_0^{-1})^{1/2}$, $\nu = (\alpha-1)/2$, and $\mu_2 = \mu_3$.
    Applying Lemma~\ref{lem:gap} to $\tilde{V}_{t_0}$ with $\lambda$ replaced by $(t_0+\lambda^2)^{1/2}$, we obtain $\wt{\tilde{V}_{t_0}}(\Phi_\lambda(\cdot,t_0)) \leq (\alpha+1)\c/2$.
    This, together with Lemma~\ref{lem:expanding_holes} and \eqref{eq:sharp_bound_limit_excess}, implies
    \begin{equation*}
        \wt{\tilde{V}_1}(\Phi_\lambda(\cdot,1)) \leq \wt{\tilde{V}_{t_0}}(\Phi_\lambda(\cdot,t_0)) + c\mu^2 \leq \frac{\alpha+1}{2}\c + c\mu_3^2.
    \end{equation*}
    Hence, replacing $\mu_3$ by a smaller number if necessary, we conclude that $\wt{\tilde{V}_1}(\Phi_\lambda(\cdot,1)) < \alpha\c$.
    However, $\wt{\tilde{V}_1}(\Phi_\lambda(\cdot,1)) = \lim_{j \to \infty}\wt{V^{\varepsilon_j}_{T_{\varepsilon_j}}}(\Phi_{\varepsilon_j}(\cdot,T_{\varepsilon_j})) = \alpha\c$ by Lemma~\ref{lem:rough_bound}.
    This is a contradiction.
\end{proof}

We are now ready to prove Theorem~\ref{thm:main}.

\begin{proof}[Proof of Theorem~\ref{thm:main}]
    By Allard's regularity theorem \cite[p.466]{All72} and $\Theta^n(\wt{V_0},0) > 1$, there exists $\eta = \eta(n) \in (0,1)$ such that $\Theta^n(\wt{V_0},0) \geq 1+\eta$.
    We first fix $R_0 = R_0(\Gamma_0) \in (0,\dist(0,\partial U))$ such that
    \begin{equation*}
        \frac{\haus^n(\Gamma_0 \cap U_R)}{R^n} \geq \biggl(1 + \frac{3}{4}\eta\biggr)\omega_n \geq \biggl(1 + \frac{3}{4}\eta\biggr)\c
    \end{equation*}
    for all $R \in (0,R_0)$.
    We then choose $\zeta = \zeta(n) \in (1/2,1)$, which appears in \eqref{eq:expanding_holes_cutoff}, so that
    \begin{equation}
        \label{eq:proof_1}
        \int_{\Gamma_0} \Phi_0(\cdot,R^2) \,d\haus^n \geq \zeta^n\frac{\haus^n(\Gamma_0 \cap U_{\zeta R})}{(\zeta R)^n} \geq \zeta^n\biggl(1 + \frac{3}{4}\eta\biggr)\c > \biggl(1 + \frac{\eta}{2}\biggr)\c = \alpha\c.
    \end{equation}
    Let $\mu_0 \in (0,1)$ be the constant $\mu_3$ given by Lemma~\ref{lem:sharp_bound} with $\alpha = 1+\eta/2$.
    By Lemma~\ref{lem:sharp_bound}, there exists a sequence $\{\varepsilon_j\}_{j=1}^\infty \subset (0,\varepsilon_0]$ and $T_0 \in (0,R_0^2]$ such that $\lim_{j \to \infty}\varepsilon_j = 0$ and $\liminf_{j \to \infty} T_{\varepsilon_j} > T_0$.
    Then, applying Proposition~\ref{prop:limit_flow}, we obtain a subsequence (denoted by the same index) and a Brakke flow $\{V_t\}_{t \geq 0}$ with fixed boundary $\partial\Gamma_0$ such that $\lim_{j \to 0} \wt{V^{\varepsilon_j}_t} = \wt{V_t}$ in $U$ for each $t \geq 0$, $\lim_{t \searrow 0} \wt{V_t} = \wt{V_0} = \haus^n\lfloor_{\Gamma_0}$, and \eqref{eq:energy_ineq} holds.
    It follows from the definition of $T_\varepsilon$ and \eqref{eq:proof_1} that
    \begin{equation}
        \label{eq:proof_2}
        \wt{V_t}(\Phi_0(\cdot,t)) \leq \alpha\c < \wt{V_0}(\Phi_0(\cdot,t))
    \end{equation}
    for all $t \in (0,T_0]$.
    Suppose, for contradiction, that there exists $t_0 > 0$ such that $\wt{V_{t_0}}(U) \geq \wt{V_0}(U)$.
    By \eqref{eq:energy_ineq}, we have $\wt{V_{t_0}}(U) = \wt{V_0}(U)$ and $h(V_t,\cdot) = 0$ for a.e. $t \in [0,t_0]$.
    This together with Brakke's inequality \eqref{eq:Brakke_ineq} implies
    \begin{equation}
        \label{eq:proof_3}
        \wt{V_{t_2}}(\phi) \leq \wt{V_{t_1}}(\phi)
    \end{equation}
    for all $\phi \in C^1_c(U;[0,\infty))$ and $0 \leq t_1 < t_2 \leq t_0$.
    Set $s_0 = \min\{t_0,T_0\}$.
    Combining \eqref{eq:proof_2} and \eqref{eq:proof_3} yields
    \begin{equation*}
        \wt{V_{t_0}}(\Phi_0(\cdot,s_0)) \leq \wt{V_{s_0}}(\Phi_0(\cdot,s_0)) < \wt{V_0}(\Phi_0(\cdot,s_0)).
    \end{equation*}
    Hence \eqref{eq:proof_2} and this give $\wt{V_{t_0}}(1-\Phi_0(\cdot,s_0)) > \wt{V_0}(1-\Phi_0(\cdot,s_0))$.
    Then by approximation, one can show that there exists $\psi \in C^1_c(U;[0,\infty))$ such that $\wt{V_{t_0}}(\psi) > \wt{V_0}(\psi)$, which contradicts \eqref{eq:proof_3}.
\end{proof}

\bibliography{references}
\bibliographystyle{alpha}
\end{document}